\providecommand{\keywords}[1]
{
  \small	
  \textbf{\textit{Keywords : }} #1
}
\newtheorem{theorem}{Theorem}[section]
\newtheorem{definition}[theorem]{Definition}
\newtheorem{proposition}[theorem]{Proposition}
\newtheorem{lemma}[theorem]{Lemma}
\newtheorem{example}[theorem]{Example}
\DeclareMathOperator*{\Dom}{Dom\,}
\DeclareMathOperator*{\Bd}{Bd\,}
\DeclareMathOperator*{\Ima}{Im\,}
\DeclareMathOperator*{\Crit}{Crit\,}
\DeclareMathOperator*{\Cl}{Cl\,}
\DeclareMathOperator*{\card}{card\,}
\DeclareMathOperator*{\If}{If\,}
\renewcommand{\footnote}{\arabic{footnote}}
\begin{document}

	\title{From Finite Vector Field Data to Combinatorial Dynamical Systems in the Sense of Forman}
	\author{Dominic Desjardins Côté \footnote{Email : dominic.desjardins.cote@usherbrooke.ca}}
	
	\maketitle
	\bibliographystyle{plain}	
	\begin{abstract}
		The main goal is to construct a combinatorial dynamical system in the sense of Forman from finite vector field data. We use a linear minimization problem with binary variables and linear equality constraints. The solution of the minimization problem induces an admissible matching for the combinatorial dynamical system. They are three main steps for the method: Construct a simplicial complex, compute a vector for each simplex, solve the minimization problem, and apply the induced matching. We argue the effectiveness of the method by testing it on the Lotka-Volterra model and the Lorenz attractor model. We are able to retrieve the cyclic behaviour in the Lotka-Volterra model and the chaotic behaviour of the Lorenz attractor. Two extensions to the algorithm are shown. We use the barycentric subdivision to obtain a better resolution. We add conditions on the minimization problem to obtain a solution which induce a gradient matching. 
	\end{abstract}		
	
	\keywords{Computational Topology, Simplicial Complex, Combinatorial Dynamical System, Finite Vector Field Data, Matching Algorithm}	
	
	\setlength{\parskip}{1em}
	\section{Introduction}
		
		The concept of combinatorial vector fields, introduced by Robin Forman \cite{RobForMorse} \cite{RobCVF} \cite{RobForUser}, is a useful tool to discretize continuous problems in mathematics \cite{thMorseClas}, in imaging \cite{arRobJohShe}, and in the computation of homology \cite{arDisMorTheAlgo}. We are interested in the combinatorial dynamical system, because it gives a qualitative approach to study dynamical systems. We are interested to study the global dynamics by approximating the underlying dynamical system with combinatorial vector field. More development in \cite{towFormalTie} shows that classical dynamical system and combinatorial dynamical system are similar on the level of Conley Index. In recent years, a generalization for combinatorial dynamical system is the combinatorial multivector field studied has been proposed by \cite{arMultiVector}. A reason for this generalization is that the original Forman's definition of combinatorial vector field is hard to implement. We show that it is possible to construct a combinatorial dynamical system in the sense of Forman with finite vector field data.

		There are algorithms to construct a multivector field \cite{arPerHomMD}, and combinatorial gradient vector field in the sense of Forman from simplicial complex with value on vertices in $ \mathbb{R} $ \cite{arRobJohShe} \cite{arKinKnuMra}, and $ \mathbb{R}^d $ \cite{multiDimDisMor}. But there is no algorithm to construct a Forman's combinatorial dynamical system with cyclic behaviour. There are multiple results that we can apply on a combinatorial dynamical system. As an example, we can construct a semi flow \cite{semiFlowMMTW}, a cell decomposition\cite{towFormalTie} and a Morse decomposition \cite{linkCombClass2} for combinatorial dynamical systems.   
		
		By means of an example, it has been argued in \cite{arMultiVector} that combinatorial dynamical system in the sense of Forman might not be best suited for applications. The article \cite{arMultiVector} generalize it to combinatorial multivector field. For more information on combinatorial vector field, we refer this article \cite{arMulVecConMorFor} to the reader. In this paper, we argue that we can obtain a combinatorial dynamical system by Forman from data and we find an approximation of the global behaviour of the dynamical system with our method.
		
	Let us take the same example at \cite{arMultiVector} in the section 3.4 and apply our method on it. We have the following equations:
	
	\begin{equation}
		\begin{cases}
			\frac{dx}{dt} = -y + x(x^2 + y^2 - 4)(x^2+y^2-1) \\
			\frac{dy}{dt} = x + y(x^2 + y^2 - 4)(x^2+y^2-1)
 		\end{cases}.
	\end{equation}	 \label{eqEx1} 		
		
		The dynamical system (\ref{eqEx1}) has a repulsive stationary point at $ (0,0) $. It also has an attracting periodic orbit with a radius $1$ with a center at $(0, 0)$ and a repulsive periodic orbit with a radius $2$ with a center at $(0, 0)$. For the dataset, we take the following data points $ X = \{ (0.22 + 0.44i, 0.22 + 0.44j) \mid i=-8, -7, -6, \ldots, 6, 7 \text{ and } j = -8, -7, -6, \ldots, 6, 7  \} $. We construct the cubical complex of this dataset. The length of the side of a square is $ 0.44 $. By applying our algorithm, we obtain the Figure (\ref{figIntroEx}). At the middle we obtain a critical square that represents a repulsive behaviour. The arrows in yellow are all part of an attracting periodic orbit. The arrows in purple are all  part of a repulsive periodic orbit. The blue arrows represent the gradient behaviour. We obtain the expected results of a repulsive stationary point, an attracting periodic orbit and a repulsive periodic orbit. 
		
		\begin{figure}
  	 		\center
  			\includegraphics[height=10cm, width=10cm, scale=1.00, angle=0 ]{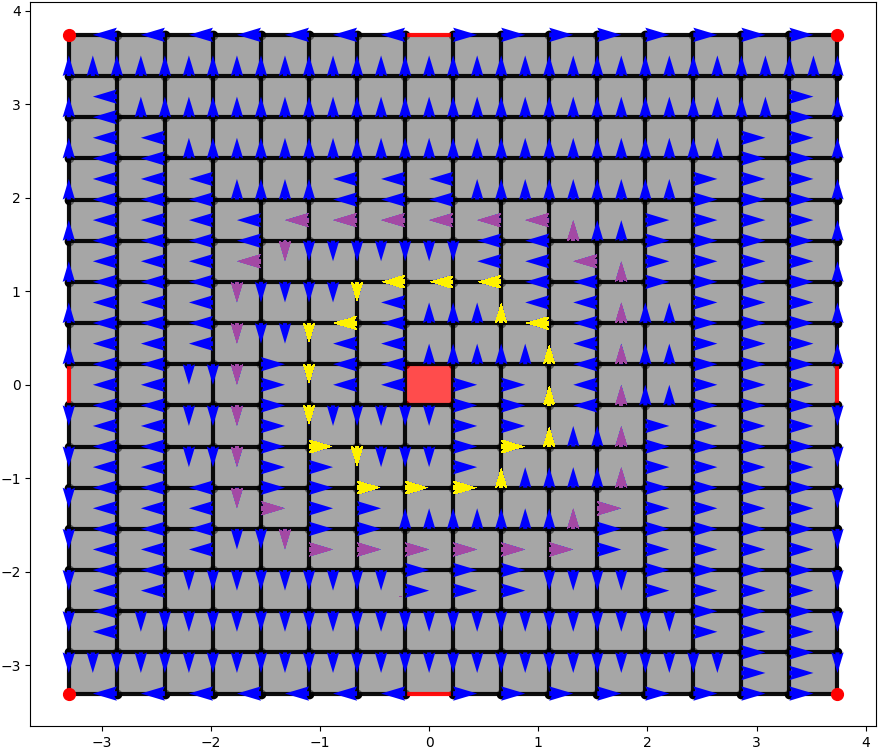}
  \caption{The combinatorial dynamical system obtained by applying our algorithm to the equations (\ref{eqEx1}) with the parameter $ \alpha = 0.90 $.}
  			\label{figIntroEx}
 		\end{figure} 
		
		The main result is the construction of a combinatorial dynamical system from finite vector field data. The data is $ X $ a set of points and $ \dot{X} $ the set of vector associates to each point. Let $ X = \{x_1, x_2, \ldots, x_N\}$ such as $ x_i \in \mathbb{R}^d $ and $ \dot{X} = \{ \dot{x_1}, \dot{x_2}, \ldots, \dot{x_N} \} $ such as each $ \dot{x_i} \in \mathbb{R}^d $ is a vector associate to the point $ x_i$. The algorithm is done in three steps. First, we need to construct a simplicial complex. For the purpose of this paper, we use a method based on the Delaunay complex and the Dowker complex. We can also use a cubical complex. Next, we assign a vector to each simplex in the simplicial complex. The assignment is different if we have a Delaunay complex or a Dowker complex. Finally, we match a $n$-simplex with a $(n+1)$-simplex or itself to satisfy the definition of a combinatorial dynamical system. We use a linear minimization with binary variables and linear equality constraints. Let $K$ a simplicial complex, $N$ the number of simplices in $K$ and $ x_i, x_j \in K$. Let's define $ A_N(\{0,1 \}) $ a matrix with binary entries. The matrix $ A $ is called the matching matrix and induce a matching. If $ a_{ij} = 1 $, then we match the simplex $ x_i $ to $ x_j $. If $ a_{ij} = 0$, then we don't match the simplex $ x_i $ to $ x_j $. For each $ a_{ij} $, we associate a cost $ c_{ij} $. We set a high cost if the matching is not admissible or the matching is bad. We put a low cost if the matching is good. If $ i = j $, then we set a fix cost to a parameter $ \alpha $. The constraints guarantee that each simplex is matched only once. We obtain the following minimization problem: 
		
	\begin{equation}\label{introEq}
		\begin{aligned}
		& \underset{A \in M_N(\{0,1\}) }{\text{minimize}}
		& & f(A) := \sum_{i=0}^{N-1}\sum_{j=0}^{N-1} a_{ij} c_{ij} \\
		& \text{subject to}
		& &  \sum_{i=0}^{N-1} a_{ik} + \sum_{j=0}^{N-1} a_{kj} - a_{kk}  = 1 , \quad k =0, 1, \ldots, N-1 
		\end{aligned}
	\end{equation}	
	    
	We have an integer linear problem(ILP).  We obtain our main result.
	
	\begin{theorem}
		If $ A$ is a global minimum to the ILP above, then $ A $ induce a matching that satisfies the definition of a combinatorial dynamical system. 
	\end{theorem}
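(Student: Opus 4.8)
The plan is to prove the theorem in two logically separate parts: first, that the linear equality constraints \emph{alone} force the binary matrix $A$ to encode a well-defined partial matching on the simplices of $K$ (each simplex is either critical, a source, or a target, and is touched exactly once); and second, that global minimality of $f$ rules out any matched pair that fails the admissibility requirement of Forman's definition, namely that a matched pair $(x_i,x_j)$ with $a_{ij}=1$ and $i\neq j$ must consist of a simplex $x_i$ that is a codimension-one face of $x_j$. Recall that an admissible matching in the sense of Forman is precisely a partial pairing of the simplices in which every matched pair is a facet--cofacet pair of consecutive dimension and every simplex lies in at most one pair, the unpaired simplices being the critical ones; here a diagonal entry $a_{kk}=1$ plays the role of declaring $x_k$ critical.

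First I would analyze the constraint
\[
\sum_{i=0}^{N-1} a_{ik} + \sum_{j=0}^{N-1} a_{kj} - a_{kk} = 1
\]
by splitting on the value of $a_{kk}$. If $a_{kk}=1$, the diagonal term is counted in both the column sum $\sum_i a_{ik}$ and the row sum $\sum_j a_{kj}$, so the constraint reduces to (row $k$ sum) $+$ (column $k$ sum) $=2$; since each sum is already at least $1$ owing to the diagonal entry, both must equal exactly $1$, forcing row $k$ and column $k$ to contain no off-diagonal ones, i.e.\ $x_k$ is critical and matched to nothing else. If $a_{kk}=0$, the constraint becomes (row $k$ sum) $+$ (column $k$ sum) $=1$, so exactly one of the two sums is $1$ and the other is $0$: $x_k$ is either a pure source or a pure target. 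From this trichotomy I would then exclude the pathological configurations: a symmetric pair $a_{ij}=a_{ji}=1$ would force both the row and column sums of $i$ to be at least $1$ with $a_{ii}=0$, contradicting the constraint; a doubly matched source $a_{ij}=a_{ik}=1$ with $j\neq k$ would give row sum $\geq 2$, again impossible; and a chain $a_{ij}=a_{jk}=1$ would force $a_{jj}=1$ while simultaneously requiring $x_j$ to be both a source and a target, a contradiction. Hence the constraints by themselves guarantee the combinatorial structure of a partial matching.

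The substantive step, and the one I expect to be the main obstacle, is showing that no \emph{inadmissible} pair survives in a global minimizer; this is where the cost design and the optimality hypothesis enter, and where the informal phrase ``high cost for a bad matching'' must be made quantitative. The plan is an exchange argument. Suppose, for contradiction, that a global minimizer $A$ contains an off-diagonal entry $a_{ij}=1$ for a pair $(x_i,x_j)$ that is not a facet--cofacet pair. By the trichotomy above, $x_i$ is then a pure source and $x_j$ a pure target, with $a_{ii}=a_{jj}=0$. Define $A'$ by setting $a_{ij}=0$ and $a_{ii}=a_{jj}=1$, i.e.\ by un-matching the pair and declaring both simplices critical. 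I would verify that $A'$ is still feasible: the constraints for $k\neq i,j$ are untouched because the only modified entries lie in rows and columns $i$ and $j$, while the constraints for $k=i$ and $k=j$ each reduce to $1+1-1=1$. Comparing objective values gives $f(A')=f(A)-c_{ij}+2\alpha$, so whenever the inadmissible cost satisfies $c_{ij}>2\alpha$ (the precise meaning of assigning a sufficiently high cost to an inadmissible match, as fixed in the cost construction) we obtain $f(A')<f(A)$, contradicting global minimality.

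Combining the two parts finishes the proof: every matched off-diagonal pair in a global minimizer is admissible by the exchange argument, and the constraints guarantee that the matching is a genuine partial pairing with the critical cells recorded on the diagonal. This is exactly the data of a combinatorial vector field, hence a combinatorial dynamical system, in the sense of Forman; note that no acyclicity condition is imposed here, since that would instead yield the gradient case treated separately as an extension. The only delicate points I anticipate are the bookkeeping in the feasibility check of $A'$ and making explicit the threshold $c_{ij}>2\alpha$ on which the argument silently depends; both become routine once the constraint trichotomy is in hand.
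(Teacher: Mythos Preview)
Your proposal is correct and follows essentially the same two-part strategy as the paper: first, the linear constraints force the binary matrix to encode a genuine partial matching (the paper argues this by case analysis on the five failure modes, you by the cleaner $a_{kk}\in\{0,1\}$ trichotomy), and second, an identical exchange argument---replace an inadmissible off-diagonal $a_{ij}=1$ by $a_{ii}=a_{jj}=1$---yields a strictly smaller objective because the inadmissible cost $c_{ij}=\max(2\alpha+1,3)$ exceeds $2\alpha$. The only item the paper adds that you omit is a one-line check that a global minimizer actually exists (the identity matrix is feasible and the feasible set is finite), which is not strictly required by the ``if $A$ is a global minimum'' phrasing of the statement.
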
		
		
		The article is organized as follows. In Section 2, we explain basic definitions related to simplicial complex and combinatorial dynamical system. In Section 3, we show how to construct a simplicial complex by using the Delaunay Complex or the Dowker Complex. In Section 4, we assign a vector to each simplex. The assignment is different, if we use a Delaunay complex or a Dowker complex. In Section 5, we define the minimization problem. We also show that the global minimum of the minimization problem induces a matching that satisfies the definition of combinatorial dynamical system. In Section 6, we use our method to the Lotka-Volterra model and the Lorenz attractor model. In Section 7, we show two extensions to the algorithm. We can apply a barycentric subdivision to a simplicial complex before solving the optimization problem. The second extension is to obtain a  combinatorial dynamical system which is a gradient field. We can choose the parameter $\alpha$ so that the solution from the minimization problem is gradient or we can add constraints to the minimization problem removing cycles in the solution. 

	\section{Preliminaries}	
	
		In this section, we discuss about simplicial complex and combinatorial dynamical system in the sense of Forman. For more information on simplicial complex, we refer to the reader this book \cite{AlgTopo} for simplicial complex and these papers \cite{towFormalTie} \cite{RobCVF} for combinatorial dynamical systems.
			
		An abstract simplicial complex is a set $K$ that contains finite non-empty sets such that if $ A \in K $, then for all subsets of $ A $ is also in $K$. We also use the geometric simplex defined as follows. A geometric $ n $-simplex is the convex hull of a geometrically independent set $ \{ v_0, v_1, \ldots, v_n \} \subset \mathbb{R}^d $. This is the set of $ x $ such as $ x = \sum_{i=0}^{n} t_i v_i $ and $ \sum_{i=0}^n t_i = 1 $. $t_i$ are called barycentric coordinates. We denote $ [ v_0, v_1, \ldots, v_n ] $ a $n$-simplex spanned by the vertices $ v_0, v_1, \ldots, v_n $. A simplicial complex $K$ is a collection of simplices such that for all $ \sigma \in K $, if $ \tau \leq \sigma $, then $ \tau \in K $ and if $ \tau = \sigma_1 \cap \sigma_2 $ then $  \tau$ is either empty or a face of $ \sigma_1 $ and $ \sigma_2 $. We note $ K_n $ the set of all $n$-simplices in $K$. Any simplex spanned by the subsets of $ \{ v_0, v_1, \ldots, v_n  \} $ are called faces. We note $ \sigma \leq \tau $. If $ \sigma \leq \tau $ and $ \sigma \neq \tau $, then we say that $ \sigma $ is a proper face of $ \tau $. We note $ \sigma < \tau $. The closure of $ \sigma $ is the union of all faces of $ \sigma $ noted $ \Cl \sigma$. The boundary of $ \sigma $ is the union of all proper faces of $ \sigma $ noted $ \Bd \sigma $.
		
		We define a partial map $ f \subset X \times Y $ if $f$ is a relation and $ (x, y), (x, y') \in f $, then $ y = y' $. We write $ f: X \nrightarrow Y $. We define the domain and the image of $ f $ as follows:
		
		\begin{itemize}
			\item $ \Dom f := \{ x \in X \mid \exists y \in Y, (x, y) \in f \} $;
			\item $ \Ima f := \{ y \in Y \mid \exists x \in X, (x, y) \in f \} $.
		\end{itemize}
		
		A partial map $f$ is injective, if for $ x, x' \in X $ we have $ f(x) = f(x')  $ implies that $ x = x' $. We can now define a combinatorial dynamical system.	We use the same definition as \cite{towFormalTie}.	
		\begin{definition}\label{dfnSDC}
			Let $ K $ a simplicial complex. $ \mathcal{V} : K \nrightarrow K $ is a combinatorial dynamical system if $ \mathcal{V} $ is a partial injective map and satisfy these conditions:
			\begin{itemize}
				\item For all $ \sigma \in \Dom \mathcal{V} $, $ \mathcal{V}(\sigma) = \sigma $ or $ \mathcal{V}(\sigma) =  \tau $ such as $ \tau > \sigma $ and $ \dim \sigma + 1 = \dim \tau $.
				\item $ \Dom \mathcal{V} \cup \Ima \mathcal{V} = K $.
				\item $ \Dom \mathcal{V} \cap \Ima \mathcal{V} = \Crit \mathcal{V} $, where $ \Crit \mathcal{V} = \{ \sigma \in K \mid \mathcal{V}(\sigma) = \sigma \} $ is the set of critical simplices.
			\end{itemize}
		\end{definition}
		
		This definition is equivalent to the definition of combinatorial dynamical system in the sense of Forman \cite{RobCVF} \cite{RobForUser}. But our definition uses a partial map. The first condition defines the combinatorial vector of $ \mathcal{V}$. The second condition is that every simplex is in a matching. The third condition is that only critical simplices are in both the image and the domain of $ \mathcal{V} $.
		
	To visualize combinatorial dynamical system, we draw a vector from the barycenter of $ \sigma $ to the barycenter of $\mathcal{V}(\sigma) $ for each $ \sigma \in \Dom \mathcal{V} \setminus \Crit \mathcal{V} $. If $ \sigma \in \Crit \mathcal{V} $, then we color the critical simplex in red. In the Figure \ref{figSdcEx}, we have some examples of combinatorial dynamical system and some counter-examples at Figure \ref{figSdcCe}.
		
		\begin{definition}\label{dfnMatchAdmi}
			Let $ \sigma, \tau \in K $ with $ \sigma \neq \tau $. We say that a pair $(\sigma, \tau)$ is an admissible matching, if $ \sigma < \tau $ with $ \dim \sigma + 1 = \dim \tau $.
		\end{definition}
		
		This definition is useful in our minimization problem. 
		
		\begin{figure}
  			\center
  			\subfigure[ $ \mathcal{V} $ is not injective. ]{
   				\includegraphics[height=5.715cm, width=7.62cm, scale=1.00, angle=0 ]{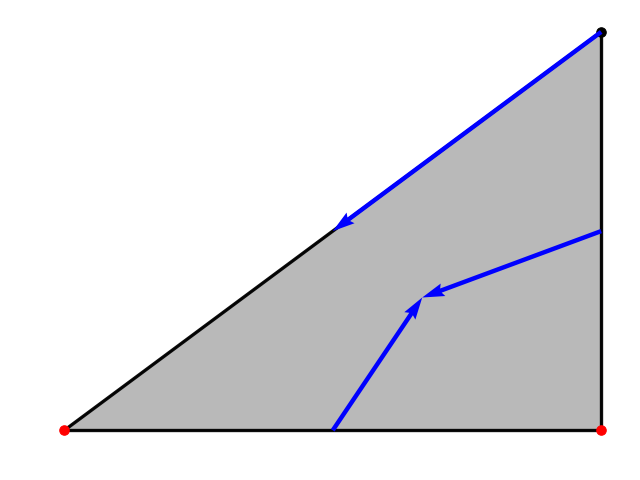}\label{subFigSdcCe1}
  			}
  			\,
  			\subfigure[ The third condition is not satisfied. ]{
   				\includegraphics[height=5.715cm, width=7.62cm, scale=1.00, angle=0 ]{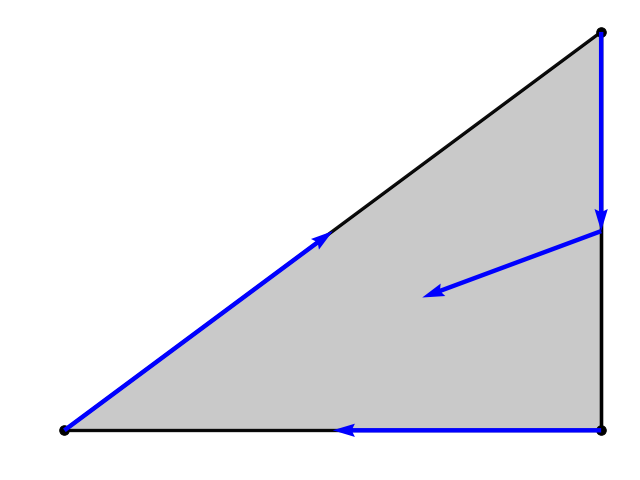}\label{subFigSdcCe2}
  }
  			\qquad \qquad \qquad
  			\subfigure[ The first condition is not satisfied. ]{
   			\includegraphics[height=5.715cm, width=7.62cm, scale=1.00, angle=0 ]{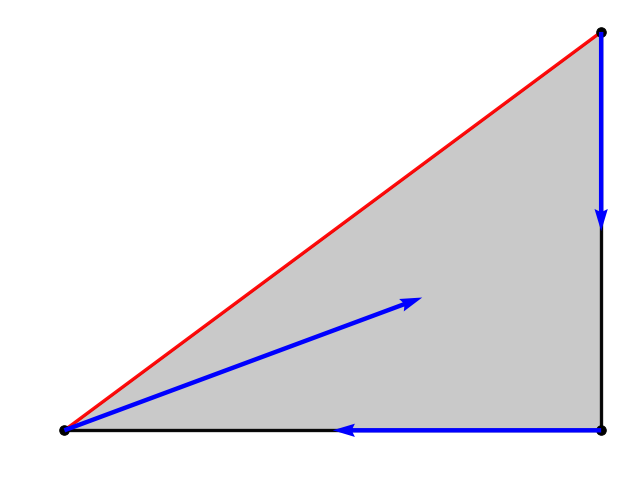}\label{subFigSdcCe3}
  }
  			\, 
    		\subfigure[ The second condition is not satisfied. ]{
    		\includegraphics[height=5.715cm, width=7.62cm, scale=1.00, angle=0 ]{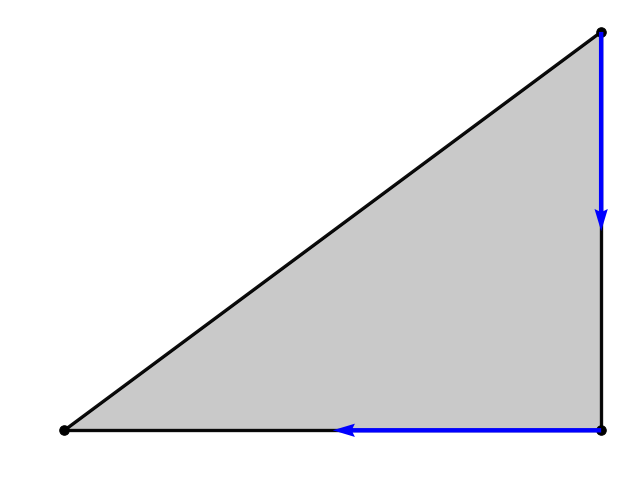}
   			 \label{subFigSdcCe4}
 }
   			 
   			 \qquad 
    		\subfigure[ $ \mathcal{V} $ is not a partial map. ]{
    		\includegraphics[height=5.715cm, width=7.62cm, scale=1.00, angle=0 ]{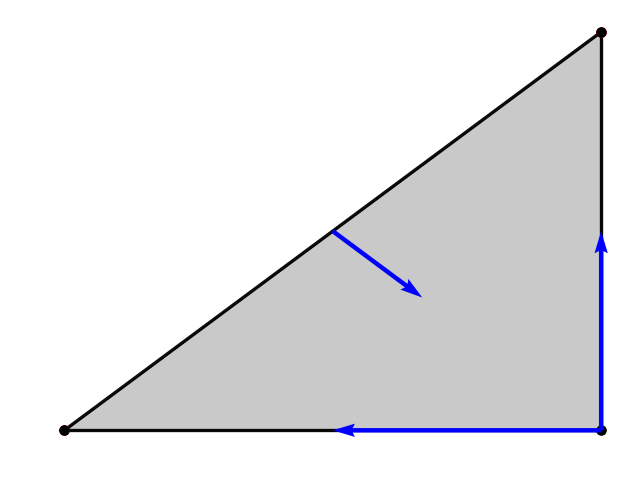}
   			 \label{subFigSdcCe5}
  }
  			\caption{Counter-examples of combinatorial dynamical systems.}
  			\label{figSdcCe}
 		\end{figure}
 		
 		\begin{figure}
  			\center
  			\subfigure[ A combinatorial dynamical system in $ \mathbb{R}^2 $.]{
   			\includegraphics[height=5.715cm, width=7.62cm, scale=1.00, angle=0 ]{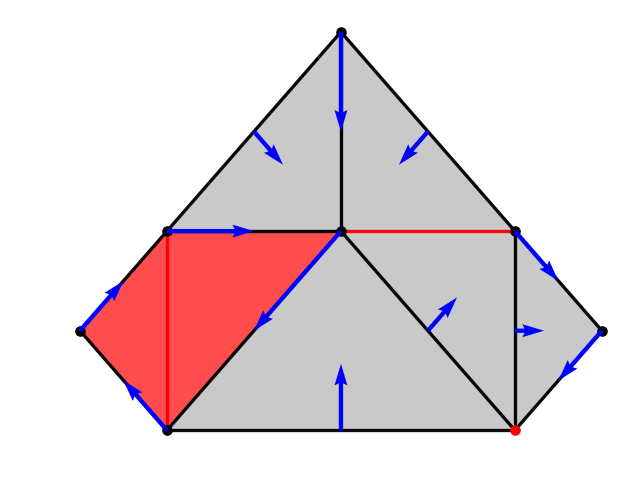}\label{subFigSdcEx1}
  		}
  		\,
  		\subfigure[ A combinatorial version of the Lorenz attractor in $ \mathbb{R}^3 $. ]{
   			\includegraphics[height=5.715cm, width=7.62cm, scale=1.00, angle=0 ]{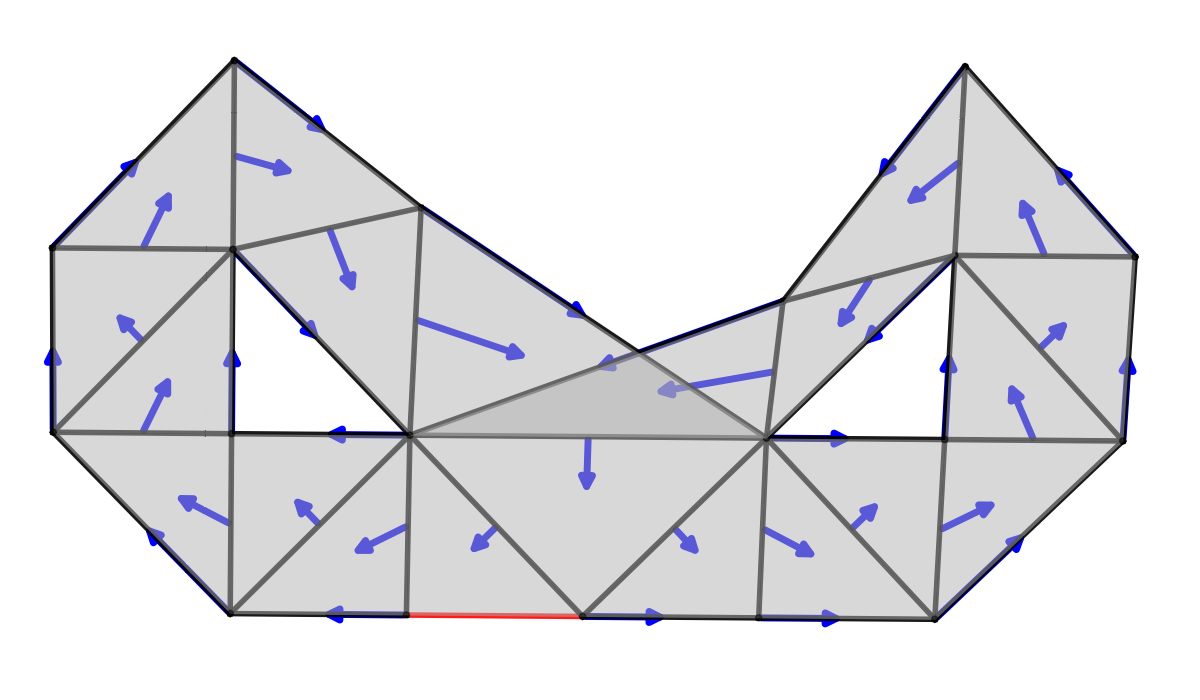}\label{subFigSdcEx2}
  }
    		\caption{Two examples of combinatorial dynamical system.}
  			\label{figSdcEx}
 		\end{figure}		
		
		A multivalued map is a map $ F: X \to P(X) $ where $P(X)$ is the power set of $X$. We write a multivalued map by $ F : X \multimap X $.
		
		\begin{definition}
			A combinatorial multi-flow associated with a combinatorial dynamical system $ \mathcal{V} $ is $ \Pi_{\mathcal{V}} : K \multimap K $ :
			\begin{equation}
				\Pi_{\mathcal{V}}(\sigma):= 
				\begin{cases}
					\Cl \sigma & \If \, \sigma \in \Crit \mathcal{V} \\
					\Bd \sigma \setminus \{ \mathcal{V}^{-1}(\sigma) \} & \If \, \sigma \in \Ima \mathcal{V} \setminus \Crit \mathcal{V} \\
					\{ \mathcal{V}(\sigma) \} & \If \, \sigma \in \Dom \mathcal{V} \setminus \Crit \mathcal{V}. \\
				\end{cases}
			\end{equation}						
			
		\end{definition}
		
				$ \Pi_{\mathcal{V}} $ induces the dynamics in combinatorial dynamical systems. 
		
			\begin{definition}
				A solution of a combinatorial multi-flow $ \Pi_{\mathcal{V}} $ of a combinatorial dynamical system $ \mathcal{V} $ is a function $ \varrho : I \to K $ such as $ I $ is an interval in $ \mathbb{Z} $ and $ \varrho(i + 1) \in \Pi_{\mathcal{V}}(\varrho(i)) $, for all $ i \in I $. We said it's a full solution if $ I = \mathbb{Z} $. 
			\end{definition}
		
	We define a cycle to be a solution $  \varrho$ with $ I = [0, n] \cap \mathbb{Z} $ such as $ \varrho(0) =  \varrho(n)$. We say a cycle is elementary, if every simplex is unique in $ \varrho(i) $ for $ i = 1,2, \ldots, n-1 $. The image of a cycle has $n$-simplex and $(n+1)$-simplex or a critical simplex. Because the only way to go up in dimension is from a simplex in $ \Dom \mathcal{V} \setminus \Crit \mathcal{V} $ and we cannot have two simplices $ \in \Dom \mathcal{V} \setminus \Crit \mathcal{V} $ in a row inside a solution. We denote a $d$-cycle where $d$ is the dimension of simplex in $ \Dom \mathcal{V} \cap \Ima \varrho $ and $ \Crit \mathcal{V} \cap\Ima \varrho = \emptyset $. We can use the strongly connected components from the directed graph of $ \Pi_{\mathcal{V}} $ to understand the recurrent dynamics of $ \mathcal{V} $. This will help us to study the method in the experiments. We say that a cycle $\varrho$ self-intersect at $ \tau $, if $ \card(\Ima \varrho \cap \Pi_{\mathcal{V}}(\tau)) > 1$.
			
	\section{Construction of a Simplicial Complex}
		In this section, we want to construct a simplicial complex. We remind the dataset is $ X \in \mathbb{R}^d $. For each $x \in X$, there is an associate vector $ \dot{x} \in \mathbb{R}^n $. In this article, we will show two different ways to construct a simplicial complex : the Delaunay complex \cite{boCompTopo} and the Dowker complex \cite{rGhristEAT}.
			
			\begin{definition}
				The Voronoi cell of a point $u \in S$ is the set of points, $V_u = \{ x \in \mathbb{R}^d \mid \| x- u \| \leq \| x - v \|, v \in S \}$. The Voronoi diagram of $S$ is the collection of Voronoi cells of its points.
			\end{definition}
			
			\begin{definition}
				The Delaunay complex $K$ of a finite set $ S \subset \mathbb{R}^d $ is isomorphic to the nerve of the Voronoi diagram,
				
				\begin{equation}
					K = \{ \sigma \subseteq S \mid \cap_{u \in \sigma} V_u \neq \emptyset \}.
				\end{equation}								
				
			\end{definition}
			
			The Delaunay complex works better if the sampling of the data is uniform. When the sampling is not uniform, this causes a great variation in the volume of $n$-simplices. It is harder to analyze the data. The Dowker complex is a solution for a non-uniform sampling of finite vector field data.		
	
			\begin{definition}
				Let $X$ and $Y$ sets and let $R \subset Y \times X$ be a relation. The Dowker complex $K$ is given by:
				\begin{equation*}
					K := \{[y_0, y_1, y_2, \ldots, y_n] \mid \text{ there exists a } x \in X \text{ such that } yRx_i \text{ for all } i= 0, 1, \ldots, n \}.
				\end{equation*}
			\end{definition} 
			
			For the purpose of this paper, we use $X$ to store data and we need to choose the set $ Y $ and a relation $R$ between $X$ and $Y$. 
		
			\begin{example}\label{exDowCmp}
				Let $X = \{ (-1.2, 0), (0, 0.5), (0, -0.5), (0.5, 0)) \} \subset \mathbb{R}^2$ and $Y = \{ y_0, y_1, y_2, y_3, y_4\}$ where $ y_0 = (0, 0), y_1 = (-1, -1), y_2 = (1, 1), y_3 =(-1, 1), y_4 = (1, -1) $. $x \in X $ and $y \in Y $ are in relation if $ \| y - x \|_2 < 1.2$. We obtain the Dowker complex is $ \{ [y_0, y_1, y_4], [y_0, y_2, y_4], [y_0, y_2, y_3], [y_1, y_3] \} $.
				
				\begin{figure}
  	 				\center
  					\includegraphics[height=3.75in, width=5in, scale=1.00, angle=0 ]{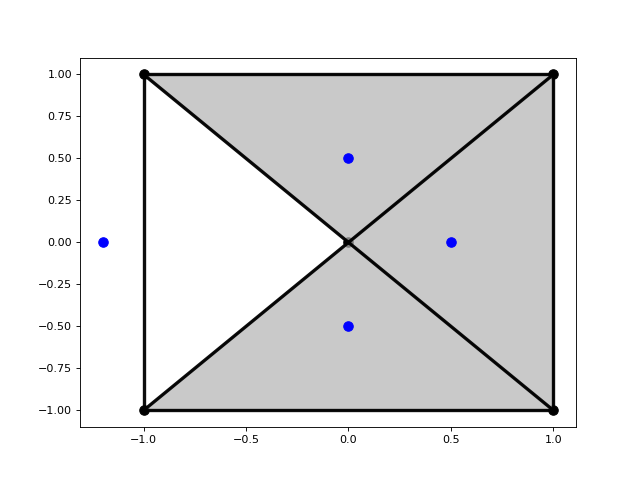}
  \caption{The Dowker complex obtained from the example (\ref{exDowCmp}).}
  					\label{imgDowkerComplex}
 				\end{figure}
			\end{example}	
		
	\section{Compute a Vector for each Simplex}
	
		In this section, we define two ways of assigning a vector to each simplex. We construct an application $ V : K \to \mathbb{R}^n $.
		
		Let $K$ be a Delaunay complex. We compute a vector for a simplex by taking an average of vectors from the vertices of the simplex. More precisely, let $ \sigma = [ x_0, x_1, \ldots, x_n ] $:
		
		\begin{equation}
			V(\sigma) := \frac{\sum_{i=0}^n \dot{x}_i}{n+1}
		\end{equation}		 
		
		Let $K$ be a Dowker complex. Let $ X $ and $\dot{X}$ from the data and $ Y $ a set and a relation $ R $ between $X$ and $Y$. We define an application $ w : K \to X $ by $w(\sigma) = \lbrace \dot{x} \in \dot{X} \mid \text{if for all } y \in \sigma, x R y \rbrace$.
		
		We assign a vector to a simplex $ \sigma = [x_0, x_1, \ldots, x_n ] $:
	
		\begin{equation}
			V(\sigma) := \frac{\sum_{\dot{x}\in w(\sigma)} \dot{x}  }{ card(w(\sigma))}
		\end{equation}
	
	\section{The Matching Algorithm}
	
		In this section, we describe the matching algorithm. We minimize a linear program with binary variables and linear equality constraints. 
		
		First, let's define the variables. Let $N$ the number of simplices and $ x_i $ the simplex where $0 \leq i < N$. We define a matrix $ A \in M_{N}(\{ 0,1 \}) $ of dimensions $N \times N$ where the entries are binary values. We call $ A $ the matching matrix. We do the matching as follows :
		
		\begin{itemize}
			\item If $ a_{ij} = 1 $, match $ x_i $ to $x_j$ $(\mathcal{V}(x_i) = x_j)$
			\item If $ a_{ij} = 0 $, do not match $ x_i $ to $x_j $ $ (\mathcal{V}(x_i) \neq x_j) $
 		\end{itemize}			
 		
 		Now, let us define the matrix $ C \in M_{N}(\mathbb{R}) $ called the cost matrix. Let $ V : K \to \mathbb{R}^n $ be the map that takes a simplex and returns the vector value from the data. Let $b : K \to \mathbb{R}^n $ be the map sending each simplex to its barycenter and $ W : K \times K \to \mathbb{R}^n $ be given by $ W(x_i, x_j) = b(x_j) - b(x_i) $. Then, $ W(x_i, x_j) $ is a vector that starts from the barycenter of $ x_i $ and ends at the barycenter of $x_j$. The main idea is to compare the vector $ V(x_i) $ to the vector $ W(x_i, x_j) $ when the matching is admissible. Given $ x_i \neq x_j $, the matching is admissible if $ x_i < x_j $ and $ \dim x_i + 1 = \dim x_j $. We use the cosine distance to compare them. It is defined as follows:
 		
 		\begin{equation}
 			d(x, y) = 1 - \cos(\theta) = 1 - \frac{x \cdot y}{ \| x\| \| y \|},
		\end{equation}
	where $ \theta $ is the angle between $ x $ and $ y$. If $ d(x,y) = 0 $, then $ x $ and $y$ are parallel in the same direction. If $ d(x,y) = 1 $, then $ x $ and $y$ are perpendicular. If $ d(x,y) = 2 $, then $ x $ and $y$ are parallel in opposite directions. We set $ c_{ij} = d(V(x_i), W(x_i, x_j)) $ when $ x_i $ and $ x_j $ is an admissible matching and $ V(x_i) \neq 0 $.
	
	If $ V(x_i) = 0 $ and $ a_{ij} $ is an admissible matching, then $ c_{ij} = 2$. Indeed, $ d(x, 0) = 1 $ for every $ x \in \mathbb{R}^d $. This means that all vectors are perpendicular to $ \vec{0} $. But $x_i$ should be a critical simplex. By setting higher value for his cost, it has a higher chance that the solution will set this simplex to be critical.
		
	If $ i = j $, then we set to value $ c_{ii} $ to $ \alpha \in [0, 2] $. $ \alpha $ is a parameter choose by the user. If the $ \alpha $ value is low, then we have more critical simplices. If the $ \alpha $ value is high, then we have less critical simplices. 
	
	Let us explain the geometric interpretation of the parameter $ \alpha $. There exists $ \beta \in [-1, 1] $ such as $ \alpha = 1 - \beta $. Fix a simplex $ x_i $ and consider all the admissible matching $ a_{ij} $. Suppose that for all $ c_{ij} > \alpha $ and let $ \theta_j $ the angle between $ V(x_i) $ and $ W(x_i, x_j) $. We obtain:
	
	\begin{align*}
		& 1 - \beta <1 - d(V(x_i), W(x_i, x_j))  \\ 
		& \implies 1-\beta <1 - \cos(\theta_j)  \\
		& \implies \beta > \cos(\theta_j)\\
		& \implies \arccos(\beta) < \theta_j,
	\end{align*}	  
	where $ \arccos(\beta) $ represents the critical angle. If for all $ \theta_j  > \arccos(\beta) $, then the minimization problem (\ref{eqPrOp}) put simplex $ x_i $ is a critical simplex or it matches with another simplex of lower dimensions.
	
	If the matching is not admissible and $ i \neq j $, then $ C_{ij} = \max(2\alpha + 1, 3) $.
	
	Finally, we obtain this equality for the entries of the matrix $ C $ : 
	
	\begin{equation}
		C_{ij} = \begin{cases}
			d(V(x_i), W(x_i,x_j))	& \text{if } (x_i, x_j) \text{ is admissible and } V(x_i) \neq \vec{0} \\
			2 & \text{if } (x_i, x_j) \text{ is admissible and } V(x_i) = \vec{0} \\
			\alpha & \text{if } i = j \\
			\max(2\alpha + 1, 3)  & \text{Otherwise}
		\end{cases}.
	\end{equation}

	\begin{figure}
  	 	\center
  		\includegraphics[height=3in, width=4in, scale=1.00, angle=0 ]{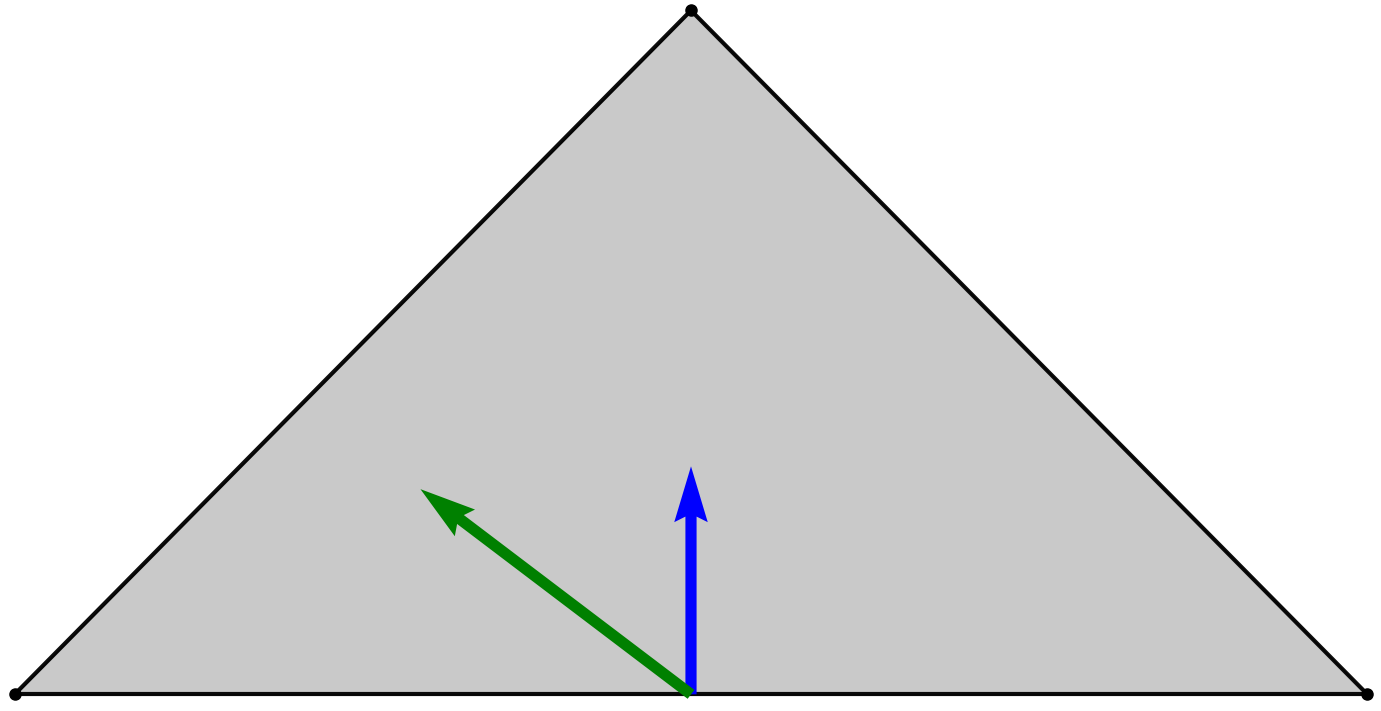}
  \caption{We compare $V([v_0, v_1])$ in green and  $ W([v_0, v_1], [v_0, v_1, v_2]) $ in blue with the cosine distance.}
  \label{compVW}
 \end{figure}
		
		Finally, we obtain the following minimization linear problem with binary variables and linear equality constraints:
		
	\begin{equation}\label{eqPrOp}
		\begin{aligned}
		& \underset{A \in M_N(\{0,1\}) }{\text{minimize}}
		& & f(A) := \sum_{i=0}^{N-1}\sum_{j=0}^{N-1} a_{ij} c_{ij} \\
		& \text{subject to}
		& &  \sum_{i=0}^{N-1} a_{ik} + \sum_{j=0}^{N-1} a_{kj} - a_{kk}  = 1 , \quad k =0, 1, \ldots, N-1 
		\end{aligned}
	\end{equation}	
	
	Let us explain the double sum in the constraint for a fixed $k$. The sum $ \sum_{i=0}^{N-1} a_{ik} $ counts the number of vectors going into $ x_i $ i.e. $ \mathcal{V}(x_i) = x_k  $. The $ \sum_{j=0}^{N-1} a_{kj} $ counts the number of vectors going out of $ x_i $ i.e $ \mathcal{V}(x_k) = x_i $. We remove the term $ a_{kk} $ because it is counted twice. 
	
	\begin{theorem}\label{thGloMinOpt}
		If $ A$ is a global minimum of the ILP (\ref{eqPrOp}), then $ A $ induce a matching that satisfies the definition of a combinatorial dynamical system. 
	\end{theorem}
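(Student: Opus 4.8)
The plan is to separate the two ingredients of the problem: the linear equality constraints, which \emph{any} feasible $A$ satisfies, and global optimality, which a minimizer additionally enjoys. I expect the constraints alone to force $\mathcal{V}$ to be a partial injective map and to yield the second and third conditions of Definition \ref{dfnSDC}, while optimality will be needed only for the first condition. First I would rewrite the constraint. For fixed $k$, write $r_k = \sum_{j \neq k} a_{kj}$ for the off-diagonal row sum and $c_k = \sum_{i \neq k} a_{ik}$ for the off-diagonal column sum; since $\sum_i a_{ik} = c_k + a_{kk}$ and $\sum_j a_{kj} = r_k + a_{kk}$, the constraint collapses to

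\begin{equation}
	r_k + c_k + a_{kk} = 1 .
\end{equation}

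As $r_k, c_k, a_{kk}$ are nonnegative integers, exactly one of them equals $1$. In particular the full row sum $r_k + a_{kk}$ and the full column sum $c_k + a_{kk}$ are each at most $1$, so every row and column of $A$ carries at most one $1$. The row bound says each simplex has at most one image, so $\mathcal{V}$ is a partial map; the column bound says each simplex has at most one preimage, so $\mathcal{V}$ is injective.

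Next I would read the three cases off the identity $r_k + c_k + a_{kk} = 1$. If $a_{kk} = 1$ then $r_k = c_k = 0$ and $x_k$ is critical; if $r_k = 1$ then $x_k$ has a unique image and no preimage, so $x_k \in \Dom \mathcal{V} \setminus \Ima \mathcal{V}$; if $c_k = 1$ then $x_k \in \Ima \mathcal{V} \setminus \Dom \mathcal{V}$. Every simplex lands in one of these cases, giving $\Dom \mathcal{V} \cup \Ima \mathcal{V} = K$, the second condition. Moreover $x_k \in \Dom \mathcal{V} \cap \Ima \mathcal{V}$ forces both $r_k + a_{kk} = 1$ and $c_k + a_{kk} = 1$, which together with the constraint yield $a_{kk} = 1$; conversely $a_{kk} = 1$ places $x_k$ in both sets. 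Hence $\Dom \mathcal{V} \cap \Ima \mathcal{V} = \Crit \mathcal{V}$, the third condition. All of this uses only feasibility.

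The essential remaining point is the first condition: every off-diagonal match must be admissible in the sense of Definition \ref{dfnMatchAdmi}. Here I would invoke global optimality through an exchange argument. Suppose $a_{ij} = 1$ for some $i \neq j$ with $(x_i, x_j)$ not admissible. By the case analysis this edge is the only nonzero entry in row $i$ and the only nonzero entry in column $j$, and $a_{ii} = a_{jj} = 0$. Define $A'$ by setting $a_{ij} := 0$ and $a_{ii} := a_{jj} := 1$. Only the constraints indexed by $i$ and $j$ are touched, and each again reads $0 + 0 + 1 = 1$, so $A'$ is feasible, and the cost changes by

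\begin{equation}
	f(A') - f(A) = 2\alpha - \max(2\alpha + 1, 3) < 0 ,
\end{equation}

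because $\max(2\alpha+1,3) \geq 2\alpha+1 > 2\alpha$. This contradicts the minimality of $A$, so no off-diagonal match of a global minimum can be inadmissible; hence every $x_k \in \Dom \mathcal{V}$ with $\mathcal{V}(x_k) \neq x_k$ satisfies $x_k < \mathcal{V}(x_k)$ and $\dim \mathcal{V}(x_k) = \dim x_k + 1$, which is exactly the first condition.

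The main obstacle is precisely this last exchange step: one must exhibit a feasible perturbation guaranteed to strictly lower the cost. Replacing an inadmissible pair by two critical cells is the natural candidate, and it succeeds only because the penalty $\max(2\alpha+1,3)$ assigned to an inadmissible off-diagonal entry was chosen strictly above the cost $2\alpha$ of the two diagonal entries that absorb the freed simplices. Everything else is bookkeeping on the constraint, and the single point requiring care there is to confirm that the swap disturbs no constraint other than those indexed by $i$ and $j$.
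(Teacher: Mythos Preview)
Your proof is correct and follows essentially the same approach as the paper: feasibility alone (via your clean rewriting $r_k+c_k+a_{kk}=1$) secures that $\mathcal{V}$ is a partial injective map satisfying the second and third conditions, and global optimality is invoked through the identical exchange argument (replace an inadmissible $a_{ij}=1$ by $a_{ii}=a_{jj}=1$, costing $2\alpha < \max(2\alpha+1,3)$) to establish the first condition. The paper additionally checks feasibility and boundedness to assert that a global minimum exists, which you omit but is not strictly required by the hypothesis of the theorem.
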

	
	\begin{proof}
		Let us show that there exists a global minimum for the optimization problem (\ref{eqPrOp}) and it induces an admissible matching.  We set $ a_{kk} = 1 $ for all $ k =0, 1, \ldots, N-1 $ and $ a_{ij} = 0 $ for $ i \neq j $. $A$ satisfies the constraints of (\ref{eqPrOp}) and $A$ induces the matching that all simplices are critical. Then, the solution set of (\ref{eqPrOp}) is always feasible. The variables $ x_i $ are bounded between $0$ and $1$ for all $i = 0, 1, \ldots, N-1$. Because the set of solutions is bounded and feasible, there exists a global minimum for (\ref{eqPrOp}).
	
		Let's show by contraposition that if the matrix $ A$ does not induce an admissible matching, then it does not satisfy the constraint of the minimization problem (\ref{eqPrOp}). We have $5$ cases that can be seen in Figure \ref{figSdcCe}. If there are two vectors going out of $ x_k$, then there exist $i,j $ such as $ a_{ki} = 1 $ and $ a_{kj} = 1 $ for $ i \neq j $. That implies that the constraints of (\ref{eqPrOp}) are not satisfied. If there are two vectors going in $ x_k $ then there exist $i,j $ such as $ a_{ik} = 1$ and $ a_{jk} = 1 $ for $ i \neq j $. The constraints of (\ref{eqPrOp}) are not satisfied. If there is a vector going in $ x_k $ and another vector going out of $ x_k $, then there exist $i,j $ such as $ a_{ik} = 1 $ and $ a_{kj} = 1 $. Then, the constraints of (\ref{eqPrOp}) are not satisfied. We obtain that $ \Dom \mathcal{V} \cap \Ima \mathcal{V} = \Crit \mathcal{V} $. For all $ x_k \in K $, there exists $i$ such as $ a_{ik} = 1 $ or $ a_{ki} = 1 $. This means that either $ x_k \in \Dom \mathcal{V} $ or $ x_k \in \Ima \mathcal{V} $. We obtain the equality $\Dom \mathcal{V} \cup \Ima \mathcal{V} = K $. 		
		
		We need to check that if $ A $ have an inadmissible matching then $A$ is not a global minimum. Let's show it by contradiction. Let $ A$ by the global minimum of the problem (\ref{eqPrOp}) and $ a_{ij} = 1 $ where ($ x_i $, $ x_j $) is an inadmissible matching with $ i \neq j $. Since $ a_{ij} = 1 $, $ c_{ij} = \max(2\alpha + 1, 3) $. But, if $ a_{ii} = 1 $ and $ a_{jj} = 1$, then $ c_{ii} + c_{jj} = 2\alpha < 2\alpha +1 = c_{ij} $. So, if we change the variables for $ a_{ii} = 1, a_{jj} = 1$ and $ a_{ij} = 0 $, it still satisfies the constraints, and we obtain a smaller value for the objective function of (\ref{eqPrOp}). Then, $A$ is not the global  minimum of (\ref{eqPrOp}), and we obtain the contradiction. 		
	\end{proof}
		
	 If $A$ is not an admissible matching for a combinatorial dynamical system, then we can always find a matrix $B$ that satisfies the constraint of (\ref{eqPrOp}), and $ f(B) < f(A) $.
	
	\begin{proposition}\label{propNotAdmi}
		If $ A $ is not an admissible matching for a combinatorial dynamical system, and it satisfies the constraints of the minimization problem (\ref{eqPrOp}), then we can find $B$ such that $B$ is an admissible matching, and $ f(B) < f(A) $.
	\end{proposition}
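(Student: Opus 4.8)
The plan is to strengthen the contradiction step already used in the proof of Theorem \ref{thGloMinOpt}: I would show that each inadmissible off-diagonal entry of $A$ can be broken and replaced by two critical cells in a way that preserves feasibility and strictly lowers the objective, and then perform all such replacements at once to obtain $B$. The only genuine content is that these replacements can be carried out independently, which follows from reading the equality constraints of (\ref{eqPrOp}).

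First I would record the structural consequence of feasibility. Suppose $a_{ij}=1$ with $i\neq j$ and $(x_i,x_j)$ inadmissible in the sense of Definition \ref{dfnMatchAdmi}. Inspecting the constraint for $k=i$, I would note that $a_{ii}=0$ (otherwise $x_i$ would be critical and the left-hand side would already exceed $1$), that $x_i$ receives no matching ($\sum_m a_{mi}=0$), and that $a_{ij}$ is its only outgoing one; the symmetric reading of the constraint for $k=j$ gives $a_{jj}=0$, no outgoing matching from $x_j$, and $a_{ij}$ as its unique incoming one. Hence every index occurring in an inadmissible matching occurs in no other matching, so distinct inadmissible matchings use pairwise disjoint indices.

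Next I would define $B$ from $A$ by setting, for every pair $(i,j)$ with $i\neq j$, $a_{ij}=1$, and $(x_i,x_j)$ inadmissible, the entries $b_{ij}=0$, $b_{ii}=1$, $b_{jj}=1$, and leaving all other entries unchanged. By the disjointness just established these edits do not interfere, so $B$ is well defined. Only the constraints for the touched indices can change, and for such an index $i$ the row and column now contain the single nonzero entry $b_{ii}=1$, so the constraint reads $\sum_m b_{mi}+\sum_n b_{in}-b_{ii}=1+1-1=1$; the computation for $j$ is identical. Thus $B$ satisfies the constraints of (\ref{eqPrOp}), and since it contains no inadmissible off-diagonal entry it is an admissible matching. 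For the cost, each broken pair removes $c_{ij}=\max(2\alpha+1,3)$ and adds $c_{ii}+c_{jj}=2\alpha$, and because $\max(2\alpha+1,3)\geq 2\alpha+1>2\alpha$ for all $\alpha\in[0,2]$, each replacement strictly decreases $f$; summing over the nonempty collection of broken pairs (nonempty precisely because $A$ is not admissible) yields $f(B)<f(A)$.

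The inequality itself is immediate, so the main obstacle I expect is the bookkeeping that makes the simultaneous edits legitimate. The crux is the disjointness claim of the second paragraph: it is what guarantees that replacing an inadmissible matching by two critical cells cannot disturb any other constraint, and hence that all replacements can be done at once rather than having to track how one modification affects the feasibility of the next. Once that is in place, feasibility-preservation and the cost drop are routine.
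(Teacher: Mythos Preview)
Your proposal is correct and follows essentially the same route as the paper: replace every inadmissible off-diagonal matching $a_{ij}=1$ by the pair of critical entries $b_{ii}=b_{jj}=1$, and observe that $2\alpha<\max(2\alpha+1,3)$ forces a strict decrease. Your version is in fact more careful than the paper's, since you explicitly extract from the constraints the disjointness of inadmissible pairs and verify feasibility of $B$, points the paper's proof asserts without justification.
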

	
	\begin{proof}
		If $ A $ is not an admissible matching and it satisfies the constraint of (\ref{eqPrOp}), there exists some $ (i,j) $ such as $ a_{ij} = 1 $ and ($x_i,x_j) $ is not an admissible matching for the definition of the combinatorial dynamical system. 
		
	Let us construct the matrix $B \in M_{N}(\{0, 1 \})$. If $ (x_i, x_j) $ is an admissible matching, then $ a_{ij} = b_{ij} $. If $ x_i \to x_j $ is not an admissible matching, then $ b_{ii} = b_{jj} = 1 $ and $ b_{ij} = 0 $. We have $ c_{ii} = c_{jj} = \alpha $ and $ c_{ij} = \max(2\alpha + 1, 3) $. Then, $ c_{ii} + c_{jj} = 2 \alpha < c_{ij} $.
	
	Finally, we obtain that $B$ still satisfies the constraints of (\ref{eqPrOp}), and $ f(B) < f(A) $.   
	\end{proof}
	
	We can use the proposition (\ref{propNotAdmi}) to transform $A$ to an admissible matching with a simple procedure by setting simplices in inadmissible matching too critical.
	
	Let us explain the meaning of the value of $f(A)$ of the problem (\ref{eqPrOp}). Let $A$ be a solution of (\ref{eqPrOp}) and suppose that $ V(x_i) \neq \vec{0} $ for all $i$.	Let $ \mathcal{I} $ be the set of $(i, j)$ such that $ a_{ij} = 1 $ with $ i \neq j $. Let $ \mathcal{K} $ be the set of $ k $ such that $ a_{kk} = 1 $. 
	
	\begin{align*}\label{eqMinValueF}
		f(A) :&= \sum_{i=0}^{N-1} \sum_{j=0}^{N-1} c_{ij}a_{ij} = \sum_{(i,j) \in \mathcal{I}} c_{ij} + \sum_{k \in \mathcal{K}} c_{kk} \\
			  &= \sum_{(i, j) \in \mathcal{I}} \left(1 - \frac{V(x_i) \cdot W(x_i, x_j)}{\| V(x_i) \| \| W(x_i, x_j) \|} \right) + \sum_{k \in \mathcal{K}} \alpha \\
			  &= | \mathcal{I} | - \sum_{(i,j) \in \mathcal{I}} \frac{V(x_i) \cdot W(x_i, x_j)}{\| V(x_i)\| \| W(x_i, x_j) \| }  + | \mathcal{K} | \alpha  
	\end{align*}
	where $ | \mathcal{I} |$ is the number of matchings, $ | \mathcal{K} |$ is the number of critical simplices and the sum in the middle is the sum of the cosine angle between $ V(x_i) $ and $W(x_i, x_j) $. If $ \alpha $ is high, then the minimization will set more admissible matching. If $ \alpha $ is low, then the minimization will set more critical simplices.
			
	If our problem has a lot of simplices, we could only consider the variables $ a_{ij} $ such as $(x_i , x_j)$ is an admissible matching. This reduces greatly the number of variables for the minimization problem (\ref{eqPrOp}). Let's define the minimization problem with reduced binary variables.
	
	Let $ z $ be a vector such that $ z_k $ is assigned to an admissible matching $( x_i, x_j )$ and $ z_k \in \{0, 1 \} $. Let $m$ be the number of variables in $z$. Let $ c $ be the vector with $ c_k = c_{ij} $ for the variable  $ z_k $. Let $D \in M_{N, m}(\{0,1\})$ the constraint matrix. We define the entries of $D$ as follows. Let $ x_i $ be a simplex and $ z_j $ be assigned to the matching $ x_a \to x_b$. Then $ D_{i,j} = 1 $, if $ i = a $ or $ i=b$. Otherwise $ D_{i,j} = 0 $. We obtain the new minimization problem :
		 
	\begin{equation}\label{eqPrRedOpt}
		\begin{aligned}
		& \underset{z_k \in \{0,1\} }{\text{minimize}}
		& & f(z) = c \cdot z \\
		& \text{sujet à}
		& & Dz  = \vec{\mathbf{1}}_m
		\end{aligned}\qquad .
	\end{equation}		
	We put a lower bound and an upper bound to the number of variables of the minimization problem (\ref{eqPrRedOpt}). We can now easily show the next Lemma \ref{lemNbVar}.
	
	\begin{lemma}\label{lemNbVar}
	 Lets $m$ the number of variables and $N$ the number of simplices. Then, $ N \leq m \leq N^2$. 
	 \end{lemma}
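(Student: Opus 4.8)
The plan is to treat the statement as a purely combinatorial counting fact and prove the two inequalities $N \le m$ and $m \le N^2$ separately, using the description of the variables and of the constraint matrix $D$ that precedes the reduced problem (\ref{eqPrRedOpt}). The only genuine modelling input I need is that each variable $z_k$ is attached to exactly one ordered pair of simplices $(x_a,x_b)$ — either an admissible matching (with $a\neq b$, in the sense of Definition \ref{dfnMatchAdmi}) or a critical self-assignment $(x_a,x_a)$ — and that distinct variables are attached to distinct pairs.

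For the upper bound I would argue as follows. Since each variable corresponds to a distinct ordered pair $(x_a,x_b)$ drawn from the $N$ simplices of $K$, the number of variables cannot exceed the total number of ordered pairs that such a set admits, namely $N\times N = N^2$. Hence $m \le N^2$ with no further work; this bound is crude (the off-diagonal pairs are further restricted by the face/dimension condition) but it is all the lemma asks for.

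For the lower bound the key observation is that every simplex $x_i$ carries its own critical variable, the one assigned to the self-pair $(x_i,x_i)$, and that these $N$ variables are forced to be present. Indeed, they are precisely the coordinates set to $1$ in the all-critical feasible point used in the proof of Theorem \ref{thGloMinOpt}; were any of them omitted, the reduced problem (\ref{eqPrRedOpt}) would lose this guaranteed feasible point. Counting just these diagonal variables already yields $m \ge N$, and combining with the upper bound gives $N \le m \le N^2$.

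I do not expect a real obstacle, since both bounds are elementary counts; the one point that requires care is the lower bound, where one should resist trying to manufacture $N$ variables out of admissible (non-self) matchings. Such matchings need not exist for every simplex — a maximal simplex is never the small face of a coface pair, and, in the extreme case of a complex consisting of isolated vertices, there are no admissible matchings at all — so the inequality $N\le m$ must be read off from the $N$ critical self-variables rather than from the off-diagonal ones.
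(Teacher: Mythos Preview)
Your argument is correct. The paper itself gives no proof of Lemma~\ref{lemNbVar}; it simply announces that the bounds are easy and moves on. Your counting argument supplies exactly the kind of elementary justification the paper gestures at: the upper bound because each variable is attached to a distinct ordered pair of simplices, and the lower bound because the $N$ diagonal self-pairs $(x_i,x_i)$ are always among the variables. Your closing remark is well taken: the lower bound really does rest on the critical self-variables, since Definition~\ref{dfnMatchAdmi} explicitly requires $\sigma\neq\tau$, and without the diagonal variables the reduced problem~(\ref{eqPrRedOpt}) would not even be feasible in general (the all-critical matrix from the proof of Theorem~\ref{thGloMinOpt} would no longer be representable). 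So although the paper's sentence introducing $z$ speaks only of ``admissible matchings'', the self-variables must be present, and your reading is the only one under which both the lemma and the equivalence with~(\ref{eqPrOp}) hold.
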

	
	\section{Examples}
		
		\subsection{A Toy Example}
			We discuss here in detail a simple example aimed at understanding how the algorithm works.
		
		Let us define the data. We have $ v_0 =(0,0) $, $ v_1 =(1,1), $ and $ v_2 = (2,0) $ with their associate vectors $ \dot{v_0} = (0,1) $, $ \dot{v_1} = (1,0), $ and $ \dot{v_2} = (-1, -1) $. 
			
		We construct the Delaunay complex from the points $ v_0, v_1 $ and $ v_2 $ and obtain the simplicial complex $ K = \{ [v_0, v_1, v_2], [v_0, v_1], [v_1, v_2], [v_0, v_2], [v_0], [v_1], [v_2] \} $. 
		
		Let us compute the application $ V: K \to \mathbb{R}^2 $. For the $0$-simplices, we have $ V([v_0]) = \dot{v_0} = (0, 1), V([v_1]) = \dot{v_1} = (1, 0)  $ and $ V([v_2]) = \dot{v_2} = (-1, -1) $. For the other simplices, we take the average of the vectors on the vertices of the simplex : 
	\begin{align*}
		V([v_0, v_1]) = \frac{V([v_0]) + V([v_1])}{2} = \frac{(0,1) + (1,0)}{2} =(\frac{1}{2}, \frac{1}{2})\\
		V([v_0, v_2]) = \frac{V([v_0]) + V([v_2])}{2} = \frac{(0,1) + (-1, -1)}{2} =(-\frac{1}{2}, 0) \\
		V([v_1, v_2]) = \frac{V([v_1]) + V([v_2]) }{2} = \frac{(1,0) + (-1,-1)}{2} =(0, - \frac{1}{2}) \\
		V([v_0, v_1, v_2]) = \frac{V([v_0]) + V([v_1]) + V([v_2])}{3} = \frac{(0,1)+(1,0)+(-1,-1)}{3} =(0,0)
	\end{align*}
	
	We establish the index for the simplices. 
	\begin{equation*}
		x_0 = [v_0], x_1 = [v_1], x_2 = [v_2], x_3 = [v_0, v_1], x_4 = [v_0, v_2], x_5 = [v_1, v_2] \text{ and } x_6 = [v_0, v_1, v_2] 
	\end{equation*}
	
	We construct the matrix $C$  with $ \alpha = 0.75$. We have $ c_{ii} = \alpha = 0.75 $ for all $ i$. For the entries $c_{ij}$, where $ (x_i, x_j) $ is not an admissible matching, we have:
		\begin{align*}
			c_{01} = c_{02} = c_{05} = c_{06} = c_{10} = c_{12} = c_{14} = c_{16} = c_{20} = \\
			 c_{21} = c_{23} = c_{26} = c_{30} = c_{31} = c_{32} = c_{34} = c_{35} =  c_{40} =\\
			  c_{41} = c_{42} = c_{43} = c_{45} = c_{50} = c_{51} = c_{52} = c_{53} = c_{54} = \\
			c_{60} = c_{61} = c_{62} = c_{63} = c_{64} = c_{65} = \max(2\alpha+1, 3) = 3.
		\end{align*}
	
	Let's compute $ c_{ij}$ where $ (x_i, x_j)$ is an admissible matching. The set of admissible matching is $ \{ a_{03}, a_{04}, a_{13}, a_{15}, a_{24}, a_{25}, a_{36}, a_{46}, a_{56} \} $ where $ i \neq j $. Let's compute $ c_{03} $ and $c_{46} $ as an example. For $ c_{03} $, we need to compute $ W([v_0],[v_0, v_1]) $:

	\begin{align*}
		W([v_0], [v_0, v_1]) = b([v_0, v_1]) - b([v_0]) = (\frac{1}{2}, \frac{1}{2}) - (0,0) = (\frac{1}{2}, \frac{1}{2})
	\end{align*}
	Then,
	\begin{align*}
		c_{03} = d(V([v_0]), W([v_0, v_1])) = 1 - \frac{V([v_0]) \cdot W([v_0], [v_0, v_1])}{\| V([v_0]) \| \cdot \| W([v_0],[v_0, v_1]) \|} \\
		= 1 - \frac{(0,1)\cdot(\frac{1}{2}, \frac{1}{2})}{\| (0,1)\| \cdot \| (\frac{1}{2}, \frac{1}{2}) \| } = 1 - \frac{\sqrt(2)}{2} \approx 0.29
	\end{align*}			
		
	For $ c_{46} $, we need to compute $ W([v_0, v_2],[v_0, v_1, v_2]) $.
	\begin{align*}
		W([v_0, v_2], [v_0, v_1, v_2]) = b([v_0, v_1, v_2]) - b([v_0, v_2]) \\
		= \frac{(0,0) + (1,1) + (2,0)}{3} - \frac{(0,0) + (2,0)}{2}  = (0, \frac{1}{3})
	\end{align*}
	Then,
	\begin{align*}
		c_{46} = d(V([v_0, v_2]), W([v_0, v_2], [v_0, v_1, v_2])) \\ 
		= 1 - \frac{V([v_0, v_2]) \cdot W([v_0, v_2], [v_0, v_1, v_2])}{\| V([v_0, v_2]) \| \cdot \| W([v_0, v_2], [v_0, v_1, v_2]) \|} \\
		= 1 - \frac{(-\frac{1}{2}, 0) \cdot (0, \frac{1}{3})}{\|(-\frac{1}{2}, 0)\| \cdot \| (0, \frac{1}{3}) \|} = 1.00
	\end{align*}
		
	Finally, we obtain the matrix $ C $. We round it up to the last two decimals.
	
	\begin{equation}
		C = \begin{bmatrix}
		0.75 & 3 	& 3 	& 0.29 	& 1 	& 3 	& 3 \\
		3 	 & 0.75 & 3 	& 1.71 	& 3 	& 0.29 	& 3 \\
		3 	 & 3 	& 0.75 	& 3 	& 0.29 	& 1 	& 3 \\
		3 	 & 3 	& 3 	& 0.75 	& 3 	& 3 	& 0.55 \\
		3 	 & 3 	& 3 	& 3 	& 0.75 	& 3 	& 1 \\
		3 	 & 3 	& 3 	& 3 	& 3 	& 0.75 	& 0.86 \\
		3 	 & 3 	& 3 	& 3 	& 3		& 3 	& 0.75		
		\end{bmatrix}
	\end{equation}
	
	Now, we have everything set up to solve the minimization problem (\ref{eqPrOp}). We use our favourite solver to obtain this matching matrix $A$:
	
	\begin{equation}\label{eqMatToyEx}
		A = \begin{bmatrix}
		0 	 & 0 	& 0 	& 1 	& 0 	& 0 	& 0 \\
		0 	 & 0	& 0 	& 0 	& 0 	& 1 	& 0 \\
		0 	 & 0 	& 0 	& 0 	& 1 	& 0 	& 0 \\
		0 	 & 0 	& 0 	& 0 	& 0 	& 0 	& 0 \\
		0 	 & 0 	& 0 	& 0 	& 0 	& 0 	& 0 \\
		0 	 & 0 	& 0 	& 0 	& 0 	& 0 	& 0 \\
		0 	 & 0 	& 0 	& 0 	& 0		& 0 	& 1		
		\end{bmatrix}
	\end{equation}
	
	The matrix $A$ induces the matching: 
	\begin{align*}
		a_{03} &= 1 \implies \mathcal{V}(x_0) = x_3 \implies \mathcal{V}([v_0]) = [v_0, v_1] \\
		a_{15} &= 1 \implies \mathcal{V}(x_1) = x_5 \implies \mathcal{V}([v_1)] = [v_1, v_2] \\
		a_{24} &= 1 \implies \mathcal{V}(x_2) = x_4 \implies \mathcal{V}([v_2]) = [v_0, v_2] \\
		a_{66} &= 1 \implies \mathcal{V}(x_6) = x_6 \implies \mathcal{V}([v_0, v_1, v_2]) = [v_0, v_1, v_2].
	\end{align*}			
	
	We see that $ [ v_0, v_1, v_2 ] $ is a critical simplex.	
	
	\begin{figure}
  	 	\center
  		\includegraphics[height=3in, width=4in, scale=1.00, angle=0 ]{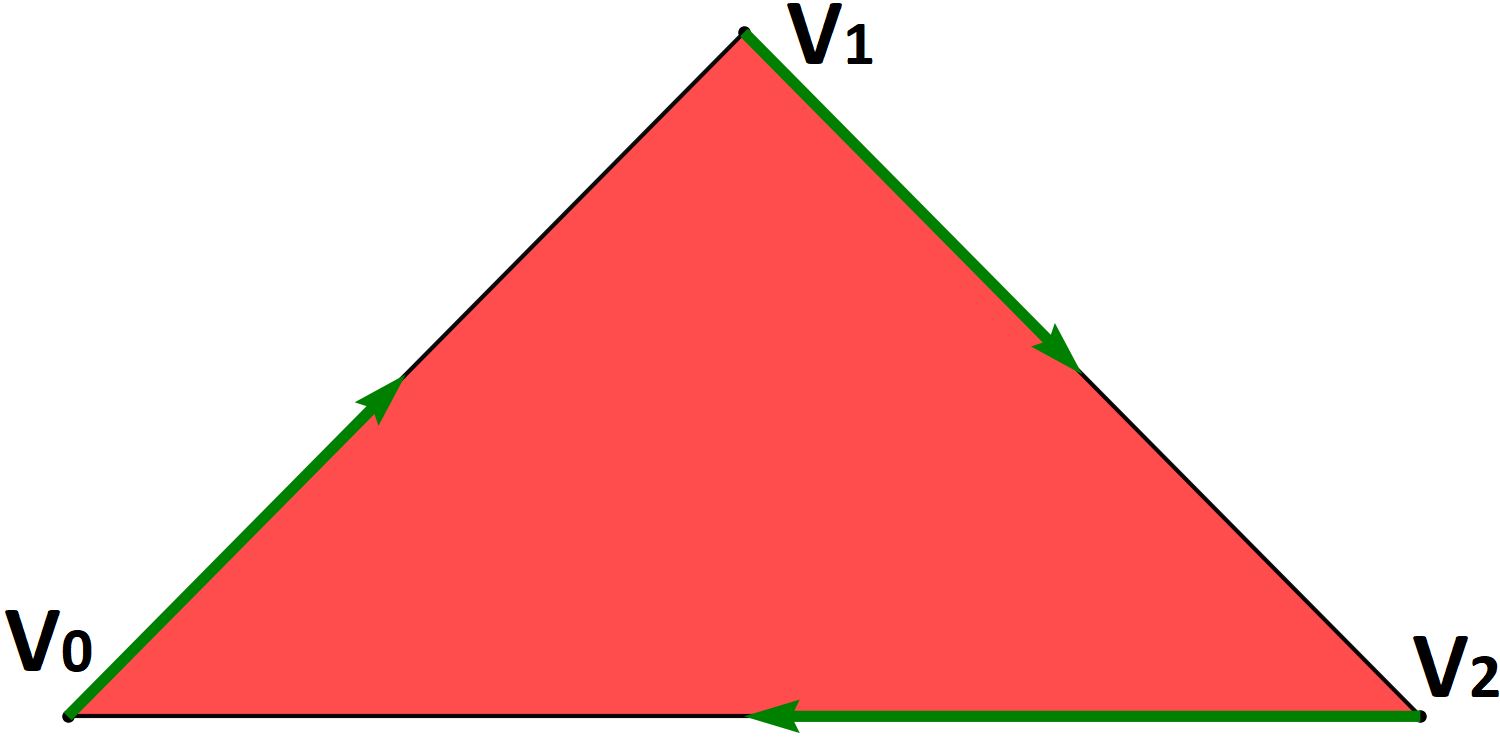}
  \caption{The combinatorial dynamical system induce by the matching matrix $ A $ from (\ref{eqMatToyEx}).}
  \label{imgGrpNonOri}
 \end{figure}
			
		\subsection{Data Generated from Classical Dynamical Systems}
			\subsubsection{Lotka-Volterra}
				Consider the data points $ X = \{ [10i, 10j ] \mid i = 0, 1, 2, \ldots,7, 8, j = 0, 1, 2 \ldots,7, 8 \} $. We compute $ \dot{x} $ with the Lotka-Volterra equations: 				
				
				\begin{equation}\label{eqLoVolt}
					\begin{cases}
						\frac{dx}{dt} = (0.4 - 0.01y)x\\
						\frac{dy}{dt} = (0.005x -0.3)y
					\end{cases}.
				\end{equation}
				
				The dynamical system (\ref{eqLoVolt}) has an equilibrium point at $(40, 60)$ and $(0,0)$. Also, there is an infinity of cycles. We construct the Delaunay complex from the data and compute the optimal solution of the reduced minimization problem (\ref{eqPrRedOpt}) with $ \alpha = 0.95 $. In this problem, we have $ 642 $ simplices and $ 1881 $ admissible matchings. The optimal solution $A$ induce the combinatorial dynamical system at Figure \ref{imgLotkVolt}.
				
				\begin{figure}
  	 				\center
  					\includegraphics[height=5.5in, width=5.5in, scale=1.00, angle=0 ]{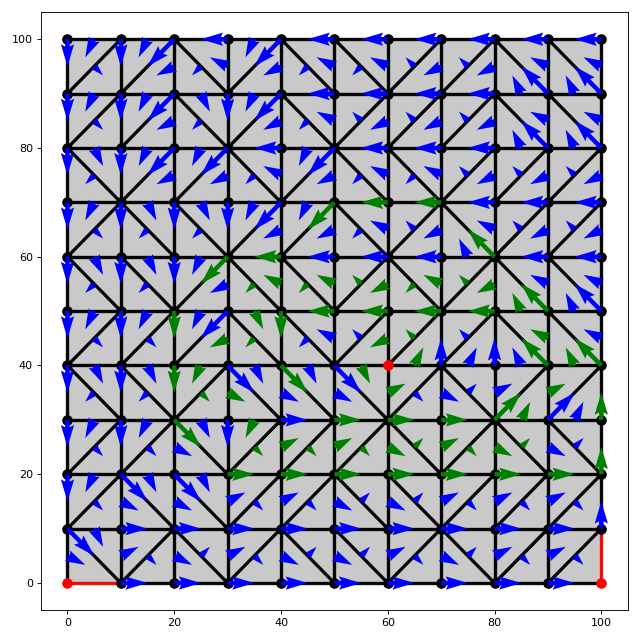}
  \caption{The combinatorial dynamical system obtained from the data with the Lotka-Volterra equations (\ref{eqLoVolt}).}
  					\label{imgLotkVolt}
 			\end{figure}
				We obtain three critical $0$-simplices and two critical $1$-simplices. Critical $1$-simplex has a similar dynamic of a saddle point but they are no saddle point in the classical dynamical system of the Lotka-Volterra. But they are needed to satisfy the definition of combinatorial dynamical system.
			
				For $i$-cycles, we have two $0$-cycles and two $1$-cycles. We find some recurrent behaviour as expected. We also have some errors on the boundary of the combinatorial dynamical system. We need to be careful when we analyze the data on the boundary of the simplicial complex. Some critical simplices are caused by finite data. As an example, the bottom right critical $0$-simplex in Figure \ref{imgLotkVolt} should not be there. There is no stationary fixed point in the classical dynamical system of (\ref{eqLoVolt}). 
			
			\subsubsection{Lorenz Attractor}
			 	We take a linear approximation of the trajectory at the initial values at $ x_0 = (0.00, 1.00, 1.05) $.  Let $ x_{i+1} = x_i + \Delta t \; \dot{x_i}  $ with $ \Delta t = 0.2 $ and $ i = 0, 1, 2 \ldots 999 $. We obtain $ 1000 $ data points. We compute $ \dot{x} $ with the Lorenz Attractor equations:
			 	
			 	\begin{equation}\label{eqLorAtt}
			 		\begin{cases}
			 			\frac{dx}{dt} = 10(y-x) \\
			 			\frac{dy}{dt} = 28x -xz - y\\
			 			\frac{dz}{dt} = xy-\frac{8}{3}z
			 		\end{cases}.
			 	\end{equation}
			 	
			We want to capture the dynamics of the chaotic attractor of (\ref{eqLorAtt}). We construct the Delaunay complex from data points. We obtained $8113$ $3$-simplices. We solve the problem (\ref{eqPrRedOpt}) to reduce the number of variables. We obtain $0$ critical $0$-simplices, $20$ critical $1$-simplices, $49$ critical $2$-simplices and $28$ critical $3$-simplices. We compute the strongly connected components of the directed graph from multivalued map $ \Pi_{\mathcal{V}}$ to find the recurrent behaviour. The recurrent behaviour is an union of $i$-cycle. We have $4$ $0$-cycles, $6$ $1$-cycles and $1$ $2$-cycle. At Figure (\ref{imgLorAtt}), we see the biggest strongly connected components. It's a $1$-cycle with $3528$ simplices and $421$ self-intersections. The dynamic of this attractor is very complex. This is caused by the chaotic behaviour of the Lorenz attractor.
			 	
				\begin{figure}
  	 				\center
  					\includegraphics[height=9cm, width=14cm, scale=1.00, angle=0 ]{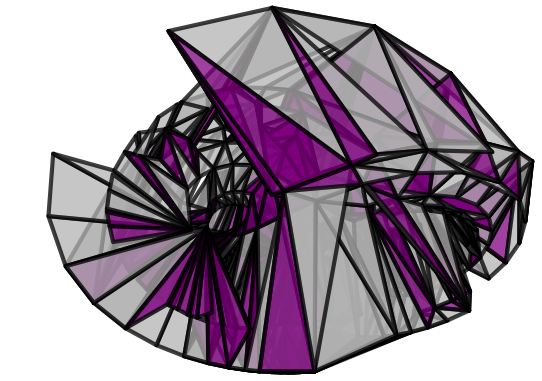}
  \caption{The biggest strongly connected components from the combinatorial dynamical system from the data of the Lorenz Attractor equations. This is a $1$-cycle. It has $3538$ simplices, including $421$ simplices which are auto-intersections coloured in purple.}
  					\label{imgLorAtt}
 			\end{figure}

	\section{Extensions to the Algorithm}	
	
		\subsection{Barycentric Subdivision}	
		
		We can apply a barycentric subdivision before the construction of the minimization problem (\ref{eqPrOp}). Let us start with a simple example to prove it is interesting to do that.
		
		Let $ x_0 $ be a $2$-simplex and $ x_1, x_2 $ and $x_3$ be $1$-simplices and faces of $ x_0 $. Suppose that $ c_{01} = c_{02} = c_{03}  < \alpha$. By the constraints of (\ref{eqPrOp}), the solution can only match one $1$-simplex with $ x_0 $. But $ x_1, x_2 $ and $ x_3 $ are good candidates for matching, because they have low costs. By applying a barycentric subdivision, we can match $ x_1, x_2 $ and $ x_3 $ with a $2$-simplex in the interior of $ x_0 $.
		
		Let us define the barycentric subdivision. Let be $K$ the simplicial complex and $K'$ be the resulted simplicial complex after applying the barycentric subdivision of $K$. We proceed by induction on the dimension of $K$. For each simplex $ \sigma \in K_n$, we add the barycentre of $\sigma$ as a new vertex in $ K' $ and connect the barycentre of $ \sigma $ with the other barycentre at the boundary of $ \sigma $ to construct the new $n$-simplices of $K'$. 
		
		Let $ v : K \to \mathbb{R} $ be the vector associated to a simplex and let us construct $ v' : K' \to \mathbb{R} $. For each $\sigma \in K_0 $, $ \sigma \in K_0' $. We set $ v'(\sigma) = v(\sigma) $. For the other simplex $ \sigma \in K' $, there exists a simplex $ \tau \in K $ such as $ \sigma $ is in the interior of $ \tau$, then put $ v'(\sigma) = v(\tau)$. 
		
		We try this approach on an example. Let $ x = \{ (-1, -1), (1, -1), (0, 2) \} $ and $ x' = \{ (1, 1), (-1, 1),$ $ (0, -2) \} $. We have a vector field sample from the dynamical system:
		
		\begin{equation*}
		\begin{cases}
			\frac{dx}{dt} = -x \\
			\frac{dy}{dt} = -y
		\end{cases}.
		\end{equation*}

 	We use the Delaunay complex to construct the simplicial complex. We should obtain a fixed point at $ (0,0) $. But there is no $0$-simplex at $ (0,0) $. The solution of the minimization will also put a $0$-simplex somewhere on the vertices. But this won't represent the finite vector field data correctly. If we apply a barycentric subdivision, then it will add a $0$-simplex at the center of the $ 2 $-simplex which is closed to the real fixed point.
		
		\begin{figure}
  			\center
  			\subfigure[ The matching from the solution obtained by solving the minimization problem without applying a barycentric subdivision. ]{
   				\includegraphics[height=5.715cm, width=7.62cm, scale=1.00, angle=0 ]{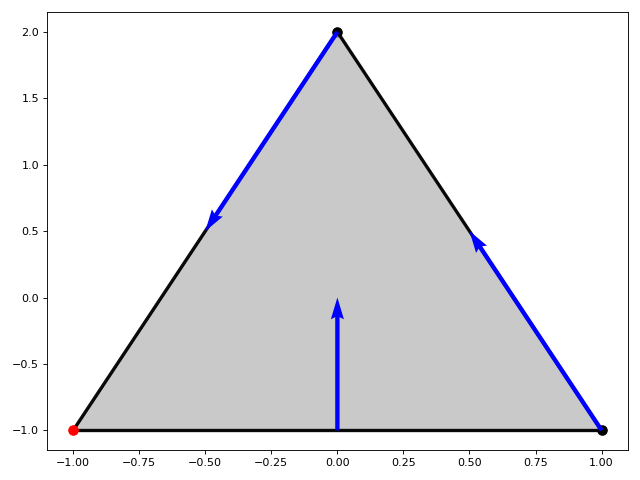}\label{subBefBaryCDS}
  			}
  			\,
  			\subfigure[ The matching from the solution obtained by solving the minimization problem with a barycentric subdivision applied to the data. ]{
   				\includegraphics[height=5.715cm, width=7.62cm, scale=1.00, angle=0 ]{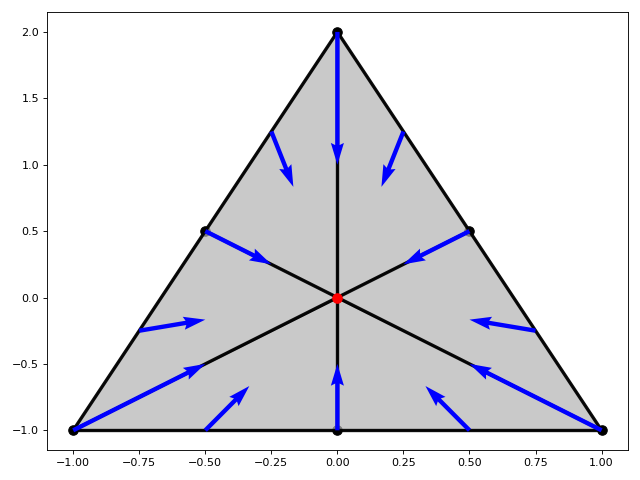}\label{subAftBaryCDS}
  }
  			\caption{An example on applying a barycentric subdivision before the matching algorithm. It gives a better result.}
  			\label{figsubBaryCDS}
 		\end{figure}
		
		In summary, applying a barycentric subdivision will help to add more details to the combinatorial dynamical system. But there is an inconvenience to the barycentric subdivision. This will add more simplices to the simplicial complex and it will take more time to compute the solution.
		
	\subsection{Gradient Combinatorial Dynamical System}
		In this subsection, we demonstrate two different methods to obtain a gradient combinatorial dynamical system with a similar approach as before. The first method is to choose an $ \alpha $ small enough that the solution $A$ from the minimization (\ref{eqPrOp}) induces a matching of a gradient combinatorial dynamical system. The second method is to add constraints to the minimization problem (\ref{eqPrOp}), so to remove all cycles in the solution. We say that a combinatorial dynamical system is gradient if there is no elementary cycle with length greater than $1$. In this case, the image of an elementary cycle has only $1$ critical simplex.
		
		For the first method, if we choose $ \alpha = 0 $, then the minimum of the problem (\ref{eqPrOp}) is obtained by setting $ a_{ii} = 1 $ for all $ i = 0, 1, \ldots, N-1 $. This set all simplices to critical and it is a gradient combinatorial dynamical system.
		
		\begin{lemma}
			Let $ c_{ij} > 0 $ for all $ i \neq j $ and $ l = \min_{i \neq j}\{ c_{ij} \} $. If $ \alpha < \frac{l}{2} $, then the global minimum of  (\ref{eqPrOp}) is $ a_{ii} = 1 $ for all $ i = 0, 1, \ldots, N-1 $ and $ a_{ij} = 0 $ for $ i \neq j $.
		\end{lemma}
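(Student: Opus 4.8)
The plan is to show that every feasible matrix of (\ref{eqPrOp}) decomposes into a set of disjoint off-diagonal matchings together with a set of critical simplices, and then simply to compare costs. First I would rewrite the constraint for a fixed $k$ by pulling the diagonal term out of each sum. Since $\sum_i a_{ik} = a_{kk} + \sum_{i \neq k} a_{ik}$ and $\sum_j a_{kj} = a_{kk} + \sum_{j \neq k} a_{kj}$, the constraint $\sum_i a_{ik} + \sum_j a_{kj} - a_{kk} = 1$ becomes
\[
	a_{kk} + \sum_{i \neq k} a_{ik} + \sum_{j \neq k} a_{kj} = 1 .
\]
Because all entries are binary, this forces exactly one of the terms on the left to equal $1$. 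Hence for each $k$ either $a_{kk} = 1$ (and every off-diagonal entry in row $k$ and column $k$ vanishes, so $x_k$ is critical), or $a_{kk} = 0$ and exactly one off-diagonal entry in row $k$ or column $k$ equals $1$.

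Next I would argue that the off-diagonal $1$'s form a genuine matching on pairwise disjoint indices. Suppose $a_{ij} = 1$ with $i \neq j$. Applying the rewritten constraint at $k = i$ shows $a_{ii} = 0$ and that $(i,j)$ is the only off-diagonal entry touching row or column $i$; applying it at $k = j$ shows the same for $j$. In particular no index can serve simultaneously as a source and a target, and no index is used twice. Thus the set $\mathcal{I} = \{(i,j) : a_{ij} = 1,\ i \neq j\}$ consists of pairs on disjoint indices, and the remaining indices form exactly the critical set $\mathcal{K} = \{k : a_{kk} = 1\}$. Writing $m = |\mathcal{I}|$, the $2m$ indices appearing in $\mathcal{I}$ are distinct, so $|\mathcal{K}| = N - 2m$.

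With this decomposition in hand, the cost evaluates to
\[
	f(A) = \sum_{(i,j) \in \mathcal{I}} c_{ij} + (N - 2m)\,\alpha \geq m\,l + (N - 2m)\,\alpha = N\alpha + m(l - 2\alpha),
\]
where the inequality uses $c_{ij} \geq l$ for every off-diagonal entry. The hypothesis $\alpha < \tfrac{l}{2}$ makes $l - 2\alpha > 0$, so $f(A) \geq N\alpha$ with equality if and only if $m = 0$. Since the all-critical matrix ($a_{ii} = 1$ for all $i$ and $a_{ij} = 0$ for $i \neq j$) is feasible and realises $m = 0$ with cost exactly $N\alpha$, it is the unique global minimum.

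I expect the main obstacle to be the matching-structure step: one must verify carefully, from the binary constraint alone, that an index cannot be both a source and a target and that the off-diagonal pairs never share an index, since this is precisely what guarantees the clean count $|\mathcal{K}| = N - 2m$ and hence the cost formula that is affine in $m$. Once that combinatorial bookkeeping is pinned down, the remaining inequality and the equality case are immediate.
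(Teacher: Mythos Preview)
Your argument is correct. The paper, however, proceeds differently: it argues by contradiction, assuming a global minimum $A$ has some $a_{ij}=1$ with $i\neq j$, and then builds a strictly cheaper feasible $B$ by replacing that single off-diagonal $1$ with the two diagonal entries $b_{ii}=b_{jj}=1$, using $c_{ii}+c_{jj}=2\alpha<l\leq c_{ij}$. This is a local-improvement step rather than a global cost bound.

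Your approach is more structural: you first extract from the constraint that any feasible $A$ decomposes into a disjoint matching $\mathcal{I}$ together with critical indices $\mathcal{K}$, get the exact count $|\mathcal{K}|=N-2m$, and then bound $f(A)\geq N\alpha+m(l-2\alpha)$ directly. This buys you a clean uniqueness statement (equality forces $m=0$) and makes the feasibility of the comparison matrix a non-issue, since you never need to swap entries. The paper's proof is shorter but tacitly relies on the same disjointness fact you spell out---namely that if $a_{ij}=1$ then no other entry in row or column $i$ or $j$ is $1$---to ensure the modified $B$ still satisfies the constraints; you make this step explicit, which is arguably an improvement.
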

		\begin{proof}
			We argue by contradiction. Let $ A $ be a global minimum of (\ref{eqPrOp}) and suppose that there exist $i \neq j $ such that $ a_{ij} = 1 $. Consider $ c_{ii} + c_{jj} = 2 \alpha < l \leq c_{ij} $. We construct $B$. Set $ b_{km} = a_{km}$ for $k = 0, 1,2, \ldots, N-1 $ and $ m = 0,1,2, \ldots N-1 $, except for $ a_{ij} = 0 $, $ a_{ii} = 1 $ and $ a_{jj} = 1 $. We have $ f(B)  < f(A) $. We get a contradiction and the global minimum is $a_{ii} = 1 $ for all $ i = 0, 1, \ldots, N-1 $ and $ a_{ij} = 0 $ for $ i \neq j $.
		\end{proof}
		
		The main idea for this method is to choose the greatest $ \alpha \in [0, 2] $ such that the solution obtained from (\ref{eqPrOp}) is induced a gradient combinatorial vector field. The advantage of this approach is that it is fast to compute. But it gives more critical simplices than the second method. 
		
		For the second method, we add a new set of constraints to the integer linear problem (\ref{eqPrOp}), so to make no cycle in the solution. The new minimization problem is:
		
		\begin{equation}\label{eqPrOpGradient}
			\begin{aligned}
			& \underset{A \in M_N(\{0,1\}) }{\text{minimize}}
			& & f(A) := \sum_{i=0}^{N-1}\sum_{j=0}^{N-1} a_{ij} c_{ij} \\
			& \text{sujet à}
			& &  \sum_{i=0}^{N-1} a_{ik} + \sum_{j=0}^{N-1} a_{kj} - a_{kk}  = 1 , \quad k =0, 1, \ldots, N-1  \\
			& & & \sum_{a_ij \in c} a_{ij} < | c |, \quad c \in C
			\end{aligned}
	\end{equation}	
	where $ C $ is the set of all possible elementary cycles with length greater than $1$ on $K$. Let $K$ a simplicial complex and $d$ the dimension of $K$. The elementary cycles can be decomposed in $i$-cycle:
	\begin{equation*}
		C = \bigcup_{i = 0}^{d-1} \{ \text{all possible elementary } i\text{-cycles} \}.
	\end{equation*}
		
	\begin{theorem}\label{thGloMinOptGrad}
		If $ A $ is a global minimum to the ILP (\ref{eqPrOpGradient}), then $ A $ induce a matching that satisfies the condition of a gradient combinatorial dynamical system.
	\end{theorem}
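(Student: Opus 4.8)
The plan is to build on Theorem \ref{thGloMinOpt} by showing that the global minimum of (\ref{eqPrOpGradient}) simultaneously (a) induces a valid combinatorial dynamical system and (b) contains no elementary cycle of length greater than $1$. The first claim is nearly immediate: since (\ref{eqPrOpGradient}) has all the constraints of (\ref{eqPrOp}) plus additional cycle-removing inequalities, any feasible point of (\ref{eqPrOpGradient}) is feasible for (\ref{eqPrOp}), so by the contraposition argument already carried out in the proof of Theorem \ref{thGloMinOpt}, the matching $A$ induces an admissible matching and hence a combinatorial dynamical system. I would state this reuse explicitly rather than re-proving the five inadmissibility cases. The only gap to close is feasibility: I must exhibit at least one matrix satisfying the enlarged constraint set so that a global minimum exists. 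The all-critical matrix $a_{kk}=1$, $a_{ij}=0$ for $i\neq j$ works, because it has no matching at all and therefore trivially satisfies $\sum_{a_{ij}\in c} a_{ij} < |c|$ for every $c\in C$ (the left side is $0$). Boundedness of the binary variables then gives existence of a global minimum exactly as before.

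The substantive part is verifying that the added constraints actually forbid every elementary cycle of length greater than $1$. I would argue by contraposition in the spirit of the earlier proof: suppose $A$ is feasible for (\ref{eqPrOpGradient}) yet its induced $\mathcal{V}$ admits an elementary cycle $\varrho$ of length greater than $1$. I would translate such a cycle of the multi-flow $\Pi_{\mathcal{V}}$ into the set of matching variables $a_{ij}$ that are forced to equal $1$ along it. The key observation is that an elementary cycle alternates between steps that go up in dimension (arrows $\sigma \to \mathcal{V}(\sigma)$ with $a_{ij}=1$) and steps that go down through $\Pi_{\mathcal{V}}$, so the "active" matching variables lying on $\varrho$ are precisely a subset $c\in C$ whose cardinality counts those up-arrows. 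If every such $a_{ij}$ equals $1$, then $\sum_{a_{ij}\in c} a_{ij} = |c|$, violating the strict inequality $\sum_{a_{ij}\in c} a_{ij} < |c|$ that $c$ contributes to (\ref{eqPrOpGradient}). This contradiction shows no such $\varrho$ exists, so $A$ is gradient.

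\textbf{The main obstacle} I anticipate is making the correspondence between an elementary cycle of the multi-flow and an element $c$ of the constraint family $C$ fully precise, since $C$ is defined somewhat informally as the set of all elementary $i$-cycles and the inequality is written with the shorthand $\sum_{a_{ij}\in c} a_{ij}$. I would need to pin down that $c$ is identified with its set of up-arrow matching variables $\{a_{ij} : \mathcal{V}(x_i)=x_j \text{ is an edge of } \varrho\}$ and that $|c|$ is the number of such variables, so that "$A$ realizes the cycle $c$" is equivalent to "$a_{ij}=1$ for all $a_{ij}\in c$". Once that bookkeeping is fixed, the argument is a clean counting contradiction. A secondary point worth a sentence is the converse direction implicit in the statement: I should confirm that removing all cycles of length $>1$ is exactly the definition of gradient given earlier ("no elementary cycle with length greater than $1$"), so that the cycle-free matching is genuinely a gradient combinatorial dynamical system and not merely cycle-free in some weaker sense. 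No heavy computation is required; the proof is structural and leans entirely on Theorem \ref{thGloMinOpt} together with the feasibility and counting steps above.
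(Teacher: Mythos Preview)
Your proposal is correct and follows exactly the approach the paper itself sketches: the paper's ``proof'' of Theorem~\ref{thGloMinOptGrad} is only the one-sentence remark that the argument is similar to Theorem~\ref{thGloMinOpt} together with the observation that the new constraints eliminate cycles. Your write-up is in fact more careful than the paper, since you explicitly verify feasibility via the all-critical matrix and spell out the counting contradiction $\sum_{a_{ij}\in c} a_{ij} = |c|$ that the paper leaves implicit.
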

	The proof is similar that to the Theorem \ref{thGloMinOpt} but with the new set of constraints the solution $ A $ of (\ref{eqPrOpGradient}) induces a matching with no cycle. If we set $ \alpha $ high, this approach has less critical simplices than the first method. But we have a lot of more constraints to compute.

	We end with an example comparing two methods. Let $ X = \{ (0,0), (1,1), (2,0) \} $ and $ \dot{X} = \{ (0.05, 1), (1,0), (-1,-1) \} $. With this dataset, if we do not have a low $\alpha$ the solution $A$ induces a cycle in the combinatorial dynamical system. We choose $ \alpha = 0.14 $ to obtain an admissible matching with no-cycle. For the second method, we need to add two constraints to remove all possible cycle. The comparison of methods is displayed in Figure \ref{figGradCDS}.
	
	\begin{figure}
  		\center
  		\subfigure[ The solution from the minimization problem with $ \alpha = 0.15 $. The combinatorial dynamical system has a $1$-cycle. ]{
   				\includegraphics[height=5.715cm, width=7.62cm, scale=1.00, angle=0 ]{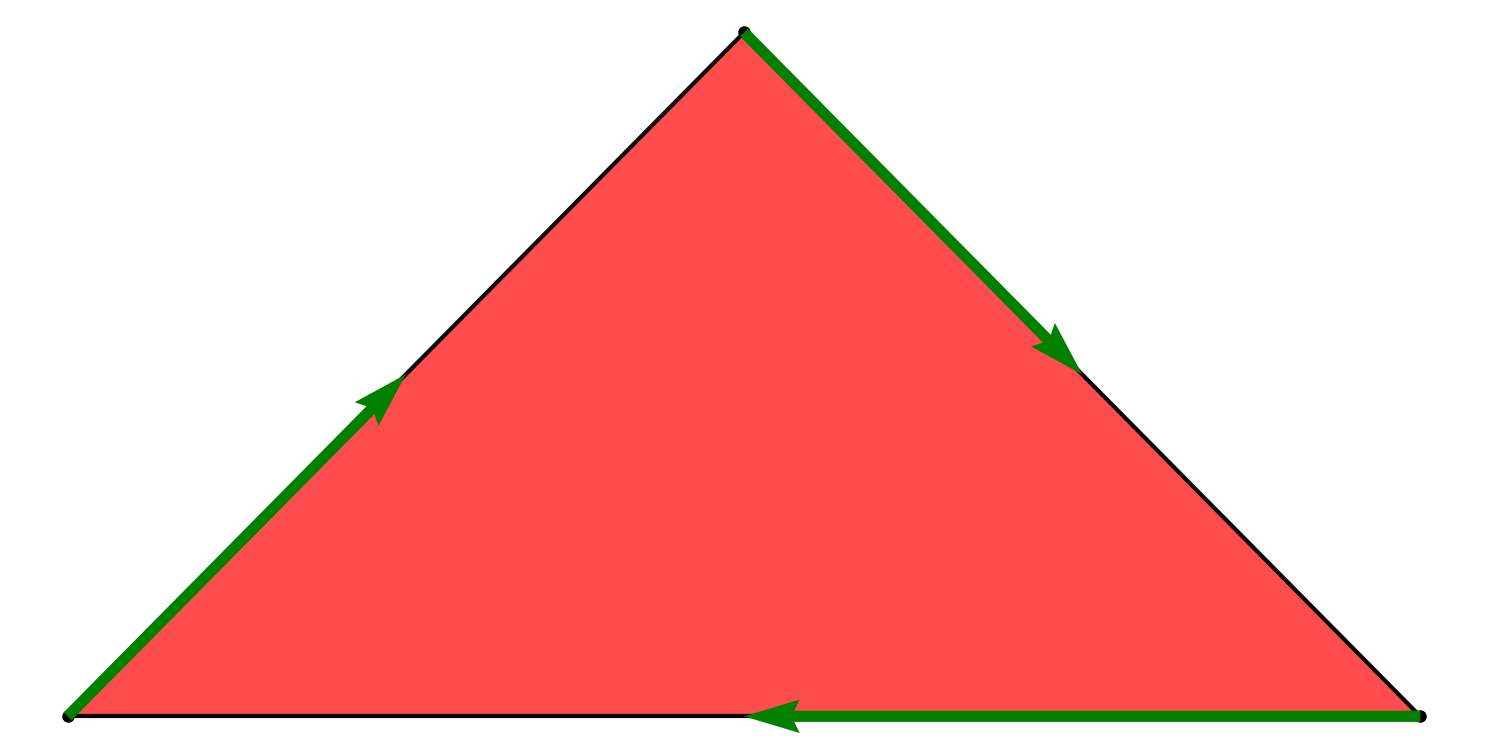}\label{subGradCDS1}
  			}
  			\,
  			\subfigure[ The solution from the minimization problem with $ \alpha = 0.14 $. The combinatorial dynamical system is gradient. ]{
   				\includegraphics[height=5.715cm, width=7.62cm, scale=1.00, angle=0 ]{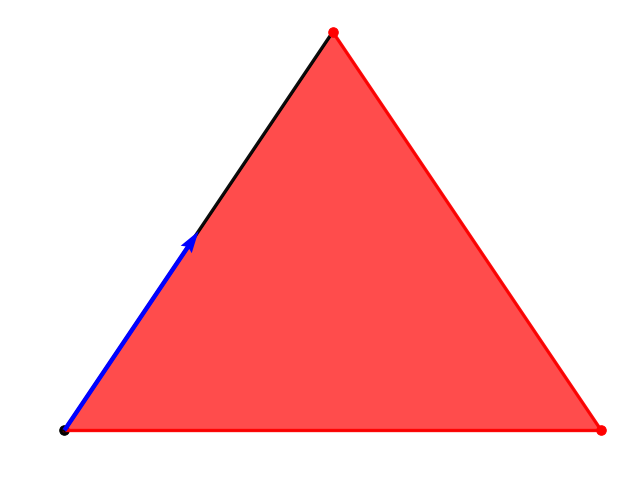}\label{subGradCDS2}
  }
  			\qquad
  			\subfigure[ The solution from the minimization problem with two constraints added to remove all cyclic solutions. The combinatorial dynamical system is gradient. ]{
   			\includegraphics[height=5.715cm, width=7.62cm, scale=1.00, angle=0 ]{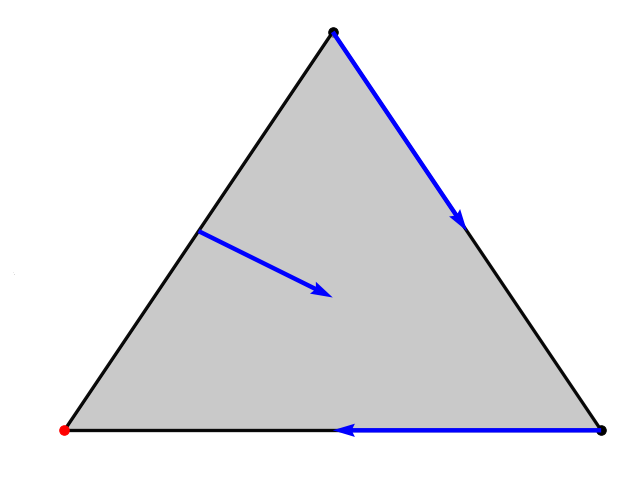}\label{subFigGradCDS3}
  }

  			\caption{Comparison of the different methods to make sure that the combinatorial dynamical system is gradient.} 
  			 \label{figGradCDS}
 		\end{figure}


	\clearpage
	\typeout{}
	\bibliography{biblio}

\end{document}